\newtheorem{theorem}{Theorem}[section]
\newtheorem{lemma}[theorem]{Lemma}
\newtheorem{proposition}{Proposition}[section]
\theoremstyle{definition}
\newtheorem{definition}[theorem]{Definition}
\newtheorem{corollary}[theorem]{Corollary}
\theoremstyle{remark}
\newtheorem{remark}[theorem]{Remark}
\numberwithin{equation}{section}
\begin{document}
	
	\title{Linear isometries on Weighted Coordinates Poset Block Space}
	
	\author{Atul Kumar Shriwastva}
	\address{{Department of Mathematics, National Institute of Technology Warangal, Hanamkonda, Telangana 506004, India}}
	\email{shriwastvaatul@student.nitw.ac.in}
	\thanks{}
	
	\author{R. S. Selvaraj}
	\address{{Department of Mathematics, National Institute of Technology Warangal, Hanamkonda, Telangana 506004, India}}
	\email{rsselva@nitw.ac.in}
	\subjclass[2010]{Primary: 20B30, 20B35, 94B60, 94B05; Secondary: 15A03}
	\keywords{Linear isometries, Automorphism group, Poset isometry, $(P,w,\pi)$-space}   
	\date{\today}
	\begin{abstract}
	 Given $[n]=\{1,2,\ldots,n\}$, a poset order $\preceq$ on $[n]$, a label map $\pi : [n] \rightarrow \mathbb{N}$ defined by $\pi(i)=k_i$ with $\sum_{i=1}^{n}\pi (i) = N$, and a weight function $w$ on $\mathbb{F}_{q}$, let  $\mathbb{F}_{q}^N$ be the vector space of $N$-tuples over the field $\mathbb{F}_{q}$ equipped with $(P,w,\pi)$-metric where $ \mathbb{F}_q^N $ is the direct sum of spaces $ \mathbb{F}_{q}^{k_1}, \mathbb{F}_{q}^{k_2}, \ldots, \mathbb{F}_{q}^{k_n} $. In this paper, we determine the groups of linear isometries of $(P,w,\pi)$-metric spaces in terms of a semi-direct product, which turns out to be similar to the case of poset (block) metric spaces. In particular, we re-obtain the group of linear isometries of the $(P,w)$-mertic spaces and $(P,\pi)$-mertic spaces.		
	\end{abstract}
\maketitle
\section{Introduction}
 Let $[n] = \{1,2,\ldots,n\}$ represents the coordinate positions of $n$-tuples  in the vector space $\mathbb{F}_q^n$.
 Brualdi et al. introduced poset metric \cite{Bru} on $\mathbb{F}_q^n$ by using partially ordered relation on $[n]$. Motivated by Brualdi et al.,  K. Feng \cite{fxh} introduced a metric known as $\pi$-metric on $\mathbb{F}_q^N$ by using a label map $ \pi : [n] \rightarrow \mathbb{N}$ such that $\sum_{i=1}^{n}\pi (i) = N$ and $\mathbb{F}_{q}^{N} \equiv \mathbb{F}_{q}^{\pi(1)}  \oplus \mathbb{F}_{q}^{\pi(2)} \oplus \ldots \oplus \mathbb{F}_{q}^{\pi(n)}$.  Thus, metrics on $\mathbb{F}_q^N$ become a new research for researchers to explore it. Errors within $\leq \frac{d_{\pi}(\mathbb{C})-1}{2}$ blocks may be corrected using a code $\mathbb{C}$ with $\pi$-metrics (linear error-block codes) where $d_{\pi}(\mathbb{C})$ is the minimum distance of $\mathbb{C}$. The creation of cryptographic schemes can also be done using block codes with different metrics. Block codes have several applications in experimental design, high-dimensional numerical integration, and cryptography.
Further,  Alves et al. \cite{Ebc}, introduced  $(P,\pi)$-metric on $\mathbb{F}_q^N$ with the help of partial order on the block positions $[n]$. I. G. Sudha and R. S. Selvaraj introduced
pomset mteric \cite{gsrs} on $\mathbb{Z}_m^n$ with the help of multiset concept and partial order relation on the multiset which is a generalization of Lee space \cite{Leecode}, in particular, and poset space, in general, over $\mathbb{Z}_m$. However, L. Panek \cite{wcps} introduced the  weighted coordinates poset metric recently (2020) which is a simplified version of the pomset metric that does not use the multiset structure. 
\par In \cite{as}, we defined the weighted coordinates poset block metric ($d_{(P,w,\pi)}$) on the space $\mathbb{F}_q^N$. It extends the weighted coordinates poset metric ($(P,w)$-metric) \cite{wcps} introduced by L. Panek and J. A. Pinheiro and generalizes the poset block metric ($(P,\pi)$-metric) \cite{Ebc} introduced by  M. M. S. Alves et al..
Before defining the weighted coordinates poset block metric on $\mathbb{F}_q^N$, we will recall certain basic definitions in order to facilitate the organization of this paper. If $R$ is a ring and $N$ is a positive integer, a map $w : R^N \rightarrow \mathbb{N} \cup \{0\}$ is said to be a weight on $R^N $ if it satisfies the following properties: 
$(a)$  $w(u) \geq 0$;  $u \in R^N $
$(b)$  $w(u) = 0$ iff  $ u = 0$ 
$(c)$  $w(- u)  = w(u) $;  $u \in R^N $ 
$(d)$  $w(u + v)  \leq w(u) + w(v)  $; $u,v \in R^N $.
\par
Let $P=([n],\preceq)$ be a poset.  An element $j \in J \subseteq P$ is said to be a maximal element of $J$ if there is no $i \in J$ such that $j \preceq i$. An element $j \in J \subseteq P$ is said to be a minimal element of $J$ if there is no $i \in J$ such that $i \preceq j$.  A subset $I$ of $P$ is said to be an ideal if $j \in I$ and $i \preceq j$ imply $i \in I$. For a subset $J$  of $P$, an ideal generated by $J$ is the smallest ideal containing $J$ and is denoted by $\langle J \rangle$.  
\par Let  $w$ be a weight on $\mathbb{F}_q$ and $M_w = \max \{w(\alpha) : \alpha \in \mathbb{F}_q \}$. For a $k \in \mathbb{N}$, and a $v=(v_1, v_2,\ldots,v_k) \in  \mathbb{F}_q^{k} $, we define  $\tilde{w}^{k}{(v)} = \max \{ w(v_i) : 1 \leq i \leq k\}$. Clearly, $\tilde{w}^{k}$ is a weight on $\mathbb{F}_q^{k} $ induced by the weight $w$. On 	$ \mathbb{F}_{q}^{k_i} $, $1 \leq i \leq n$, we call $\tilde{w}^{k_i}$, a block weight.
\begin{definition}
  Given a partial order $\preceq $ on $[n]=\{1,2,\ldots,n\}$, the pair $P=([n],\preceq)$ is a poset.  With a label map  $\pi  : [n] \rightarrow \mathbb{N} $ defined as $\pi (i) =k_i $
  in the previous page such that $ \sum\limits_{i=1}^{n} \pi (i) = N  $, a positive inetger, we have $	\mathbb{F}_{q}^{N} \equiv \mathbb{F}_{q}^{k_1}  \oplus \mathbb{F}_{q}^{k_2} \oplus \ldots \oplus \mathbb{F}_{q}^{k_n} $. Thus, if  $x \in  \mathbb{F}_{q}^{N} $ then $x= x_1 \oplus x_2 \oplus \cdots \oplus x_n $ with  $x_i = (x_{i_1},x_{i_2},\ldots,x_{i_{k_i}}) \in \mathbb{F}_{q}^{k_i} $. Let $I_{x}^{P,\pi} =  \langle supp_{\pi}(x) \rangle $ be the ideal generated by the $\pi$-support of $x$ and $M_{x}^{P,\pi}$ be the set of all maximal elements in $I_{x}^{P,\pi}$. The weighted coordinates poset block weight or $(P,w,\pi)$-weight of  $x \in \mathbb{F}_q^N$ is defined as
\begin{equation*}
	w_{(P,w,\pi)}(x) \triangleq \sum\limits_{i \in M_{x}^{P,\pi}} \tilde{w}^{k_i}{(x_i)} + \sum\limits_{i \in {I_{x}^{P,\pi} \setminus M_{x}^{P,\pi}}} M_w
\end{equation*} 
The  $(P,w,\pi)$-distance between two vectors  $x,y \in \mathbb{F}_q ^N$ is defined as:
$ d_{(P,w,\pi)}(x,y) \triangleq  w_{(P,w,\pi)}(x-y)$.
$d_{(P,w,\pi)}$ defines a metric on $ \mathbb{F}_{q}^N $ called as \textit{weighted cordinates poset block metric} or $ (P,w,\pi) $-metric. The pair $ (\mathbb{F}_{q}^N,~d_{(P,w,\pi)} )$ is said to be a  $(P,w,\pi)$-space.
\end{definition}
 A $ (P,w,\pi) $-block code  $\mathbb{C} $ of length $N$ is a subset of $ (\mathbb{F}_{q}^N,~d_{(P,w,\pi)} )$-space  and 
$ d_{(P,w,\pi)}\mathbb{(C)} = min \{  d_{(P, w,\pi)} {(c_1, c_2)}: c_1,~ c_2 \in \mathbb{C} \} $ gives the minimum distance of  $\mathbb{C}$. 
If $\mathbb{C}$ is a linear $ (P,w,\pi) $-block code, then 
$ d_{(P,w,\pi)}\mathbb{(C)} = min \{ w_{(P,w,\pi)}(c) : 0 \neq c \in \mathbb{C} \} $.
It  is clear that  $ w_{(P,w,\pi)}(v) \leq n M_w$ for any $v \in \mathbb{F}_{q}^N $. Thus, the minimum distance of  $\mathbb{C} $ is bounded above by $ n M_w $.
\begin{itemize}  
	\item 	If $w$ is the Hamming weight  on $\mathbb{F}_q$, then the  $(P,w,\pi)$-space becomes the $(P,\pi)$-space  (as in \cite{Ebc}).
	\item 	If $k_i = 1 $ for every $i\in [n]$ and $w$ is the Hamming weight on $\mathbb{F}_q$,  then  the	 $(P,w,\pi)$-space becomes the poset space or $P$-space (as in \cite{Bru}).		
	\item 	If $w$ is the Hamming weight on $\mathbb{F}_q$ and $P$ is an antichain, then  the  $(P,w,\pi)$-space becomes the $\pi$-space or $( \mathbb{F}_q^N,~d_{\pi})$-space  (as in \cite{fxh}). 
	\item If $k_i = 1 $ for every $i\in [n]$ then the $(P,w,\pi)$-space becomes the $(P,w)$-space (as in  \cite{wcps}).
\end{itemize} 
\par Now, we start with defining basic thing about linear isometry on $\mathbb{F}_{q}^N$ and then proceed on determining the groups of linear isometries of $(P,w,\pi)$-metric spaces.
\par A linear isometry $T$ of the metric space $ (\mathbb{F}_{q}^N,~d_{(P,w,\pi)} )$  is a linear transformation $T : \mathbb{F}_{q}^N \rightarrow  \mathbb{F}_{q}^N $ which preserves $(P,w,\pi)$-distance. That is  $d_{(P,w,\pi)} (T(x) , T(y)) =  d_{(P,w,\pi)} (x,y) $ for every $x,y \in  \mathbb{F}_{q}^N  $. In other way, a linear transformation $T : \mathbb{F}_{q}^N \rightarrow  \mathbb{F}_{q}^N $ is said to be an isometry if  $w_{(P,w,\pi)} (T(x)) =  w_{(P,w,\pi)} (x) $ for every $x\in  \mathbb{F}_{q}^N  $. A linear isometry of  $ (\mathbb{F}_{q}^N,~d_{(P,w,\pi)} )$ is said to be a $(P,w,\pi)$-isometry. 
Set of all linear isometries of  $ (\mathbb{F}_{q}^N,~d_{(P,w,\pi)} )$ forms a group, called as group of linear isometry of  $ (\mathbb{F}_{q}^N,~d_{(P,w,\pi)} )$  and denoted by ${{LI}som}_{(P,w,\pi)} (\mathbb{F}_{q}^N) $.

\par Linear isometries take linear codes onto linear with preserving their length, dimension, minimum distance, and other parameters, so it is used to classify linear codes in equivalence classes. Therefore, if one of two linear codes is the other's mirror image under a linear isometry, it is only appropriate to refer to them as equivalent codes.
The study of full description of linear symmetries in particular cases (with label $ \pi (i)= 1 $ $\forall$ $ i \in [n] $) of poset spaces such as Rosenbloom-Tsfasman spaces, crown spaces, and weak spaces were determined by the authors K. Lee \cite{Automorphism group of the Rosenbloom-Tsfasman space}, S. H. Cho and D. S. Kim \cite{Automorphism group of the	crown-weight space}, and D. S. Kim \cite{weak spaces LISo}, respectively. Inspired by them,
L. Panek, M. Firer, H. K. Kim, and J. Y. Hyun \cite{Lmhj} provided a comprehensive description of the groups of linear symmetries in those spaces with label $ \pi (i)= 1 $ $\forall$ $ i \in [n] $.
\par After that, researchers are interested in determining the isometry group of a poset-metric space, which need not be linear. The full symmetry group (which includes non-linear isometries) of arbitrary poset space and a particular case of poset spaces that are product of Rosenbloom-Tsfasman spaces are described by J. Y. Hyun \cite{A subgroup of the full poset-isometry group}, and L. Panek et al.  \cite{Symmetry groups of RT spaces}, respectively.  In \cite{posetadmitLISO}, the authors characterize the posets that admit the linearity of isometries.
\par
The group of full linear isometries of $(P,\pi)$-metric spaces and $\pi$-metric spaces with label $ \pi (i)= 1 $ $\forall$ $ i \in [n] $ were described by M. M. S. Alves in \cite{Ebc}. Recently, L. Panek et al. \cite{wcps} approached the similar way as in \cite{Lmhj} to determine linear isometry of $ (P,w)$-metric spaces with label $ \pi (i)= 1 $ $\forall$ $ i \in [n] $ and got a similar result as described in \cite{Lmhj}. In this work, we find linear
isometries of $ (P,w,\pi)$-metric spaces with any given label $ \pi (i)= k_i $ $\forall$ $ i \in [n] $, a weight $w$ on $\mathbb{F}_{q}$, and poset $P$.
\par 
 We begin with initially as same concept in \cite{Lmhj}, to associate to each isometry $T$ an automorphism $\psi_T$ of the underlying poset $P$ (Theorem \ref{structureTheorem}). We choose a more coordinate-free methodology, and the block's dimensions introduce a new constraint. These are the primary distinctions. 
The main difference relies on the fact we are considering a general weight $ w $ instead of the Hamming weights (or Lee weights) on $ \mathbb{F}_q$ and one additional weight $ \tilde{w} $ (depends on $w$) on $ \mathbb{F}_q^{k_i}$ for each label $i \in [n] $.
 We find two subgroups of isometries: one induced by automorphisms of $P$ that preserve labels and the other by the identity map on $P$. 
Finally, we prove some results on linear isometries similar to the ones found in \cite{Lmhj}, and \cite{Ebc}, and conclude that ${{LI}som}_{(P,w,\pi)} (\mathbb{F}_{q}^N)$ is the semi-direct product of those two subgroups.
\section{Subgroups of a group of Linear Isometries}
Let $\mathcal{B}_{j} = \{ e_{{j},z} :  1 \leq z \leq k_{j} \}$ be the canonical basis of $\mathbb{F}_{q}^ {k_j}$ for each $ {j} \in [n]$ and 
$\mathcal{B} = \{ e_{j, {z}} : 1 \leq {j} \leq n,~  e_{j, z} \in\mathcal{B}_{j} \}$ be a basis for  $ \mathbb{F}_{q}^N$. 
A bijection map $\gamma : P \rightarrow P$ is said to be an order automorphism if $\gamma$ and $\gamma^{-1}$ preserves the order relation of $P$. Let $\mathcal{AUT}(P)$ denote the group of order automorphisms of given a poset $(P=([n], \preceq ))$. Let $\pi  : [n] \rightarrow \mathbb{N} $ be a label map of the poset $P$ such that  $\pi ({j}) =k_{j} >0$ for each ${j} \in [n] $. The subgroup of automorphisms $\psi \in \mathcal{AUT}(P)$ such that $ k_{\psi ({j})} = \pi (\psi ({j})) = \pi ({j}) = k_{j}$ for all ${j} \in [n] $ is denoted by $\mathcal{AUT}(P,\pi)$ and is called the group of automorphisms of $(P,\pi)$ which preserve  labels.
\par  The linear mapping $T_{\psi} : \mathbb{F}_{q}^N \rightarrow  \mathbb{F}_{q}^N $ such that $ T_{\psi} (e_{j,z}) = e_{\psi(j),z}$,  associates  each $\psi \in \mathcal{AUT}(P)$ to the  $ T_{\psi}$. Since definition of $T_{\psi}$ only makes sense if $dim( \mathbb{F}_{q}^{k_i}) = dim( \mathbb{F}_{q}^{k_j})$.
\par Let  the map  $ \Gamma: \mathcal{AUT}(P, \pi)  \rightarrow {LIsom}_{(P,w,\pi)} (\mathbb{F}_{q}^N) $ defined by $ \psi \rightarrow T_{\psi} $. Let $ \beta, \delta \in \mathcal{AUT}(P, \pi)  $ then
$  T_{\beta \delta} (e_{j,z}) = e_{ (\beta \delta) ({j}),z } = T_{\beta }( e_{ \delta (j) ,z}) = T_{\beta } T _{ \delta } ( e_{{j,z} })  $. 
Thus, $\Gamma$ is trivially a homomorphism and injective (injectivity follows from the definition of $\Gamma$). $\mathcal{I}mg(\Gamma)$ denote the  image of $\Gamma$ which is  a subgroup of  ${LIsom}_{(P,w,\pi)} (\mathbb{F}_{q}^N)$  and  isomorphic to $\mathcal{AUT}(P, \pi) $. And, $T_{\psi} (\mathbb{F}_{q}^{k_{j}}) =\mathbb{F}_{q}^{k_{\psi(j)}}$. 
  \begin{proposition}
 If  $\psi \in \mathcal{AUT}(P, \pi)$  then the linear mapping $T_{\psi}$ is a linear isometry of $ (\mathbb{F}_{q}^N,~d_{(P,w,\pi)} )$.
  \end{proposition}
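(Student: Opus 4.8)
The plan is to reduce the statement to the single claim that $T_{\psi}$ preserves the $(P,w,\pi)$-weight. Indeed, $T_{\psi}$ is linear and permutes the canonical basis $\mathcal{B}$ (via $e_{j,z}\mapsto e_{\psi(j),z}$, which is meaningful precisely because $k_{\psi(j)}=k_j$), hence $T_{\psi}$ is bijective; and once $w_{(P,w,\pi)}(T_{\psi}(v))=w_{(P,w,\pi)}(v)$ is known for all $v$, then $d_{(P,w,\pi)}(T_{\psi}(x),T_{\psi}(y))=w_{(P,w,\pi)}(T_{\psi}(x-y))=w_{(P,w,\pi)}(x-y)=d_{(P,w,\pi)}(x,y)$. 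So I would devote the proof to weight preservation.

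First I would fix $x=x_1\oplus x_2\oplus\cdots\oplus x_n$ with $x_i=(x_{i_1},\ldots,x_{i_{k_i}})\in\mathbb{F}_q^{k_i}$, set $y:=T_{\psi}(x)=y_1\oplus\cdots\oplus y_n$, and expand $x_i=\sum_{z=1}^{k_i}x_{i_z}e_{i,z}$; applying $T_{\psi}$ gives $T_{\psi}(x_i)=\sum_{z=1}^{k_i}x_{i_z}e_{\psi(i),z}$, so as coordinate vectors $y_{\psi(i)}=x_i$, equivalently $y_j=x_{\psi^{-1}(j)}$ for every $j\in[n]$. From this identification of blocks it follows at once that $supp_{\pi}(y)=\psi\bigl(supp_{\pi}(x)\bigr)$.

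Next I would transport the combinatorial data appearing in the weight formula through $\psi$. Since $\psi$ is an order automorphism, it commutes with the ideal operator, i.e. $\langle\psi(J)\rangle=\psi(\langle J\rangle)$ for every $J\subseteq P$; hence $I_{y}^{P,\pi}=\psi\bigl(I_{x}^{P,\pi}\bigr)$. Because $\psi$ and $\psi^{-1}$ both preserve $\preceq$, an element is maximal in an ideal if and only if its $\psi$-image is maximal in the image ideal, so $M_{y}^{P,\pi}=\psi\bigl(M_{x}^{P,\pi}\bigr)$; and since $\psi$ is a bijection, $\bigl|I_{y}^{P,\pi}\setminus M_{y}^{P,\pi}\bigr|=\bigl|I_{x}^{P,\pi}\setminus M_{x}^{P,\pi}\bigr|$. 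Substituting into the definition of $w_{(P,w,\pi)}(y)$, the ``non-maximal'' sum is just $\bigl|I_{x}^{P,\pi}\setminus M_{x}^{P,\pi}\bigr|\cdot M_w$, unchanged; for the ``maximal'' sum I would reindex by $j=\psi(i)$ with $i\in M_{x}^{P,\pi}$, so that the summand becomes $\tilde{w}^{k_{\psi(i)}}(y_{\psi(i)})$, and here the label-preserving hypothesis $k_{\psi(i)}=k_i$ together with $y_{\psi(i)}=x_i$ yields $\tilde{w}^{k_{\psi(i)}}(y_{\psi(i)})=\tilde{w}^{k_i}(x_i)$. Summing over $i\in M_{x}^{P,\pi}$ recovers exactly $w_{(P,w,\pi)}(x)$.

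The only delicate point, and the place where $\psi\in\mathcal{AUT}(P,\pi)$ is needed rather than merely $\psi\in\mathcal{AUT}(P)$, is this last matching of block weights: without label preservation the weights $\tilde{w}^{k_{\psi(i)}}$ and $\tilde{w}^{k_i}$ would live on spaces of different dimension — indeed $T_{\psi}$ itself would fail to be well defined. Beyond that, the argument is pure bookkeeping, namely verifying $I_{y}^{P,\pi}=\psi(I_{x}^{P,\pi})$, $M_{y}^{P,\pi}=\psi(M_{x}^{P,\pi})$, and the invariance of cardinalities under the bijection $\psi$; I do not anticipate any genuine obstacle.
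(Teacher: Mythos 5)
Your proposal is correct and follows essentially the same route as the paper: both arguments show $I_{T_{\psi}(x)}^{P,\pi}=\psi(I_{x}^{P,\pi})$ and $M_{T_{\psi}(x)}^{P,\pi}=\psi(M_{x}^{P,\pi})$ using that $\psi$ is an order automorphism, and then reindex the two sums in the weight formula. Your version is slightly more explicit about where label preservation $k_{\psi(i)}=k_i$ enters (well-definedness of $T_{\psi}$ and matching of the block weights $\tilde{w}^{k_{\psi(i)}}$ with $\tilde{w}^{k_i}$), but the substance is the same.
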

  \begin{proof}
  	 Let $x = \sum\limits_{j , z} \eta _{j z} e_{j,z} \in \mathbb{F}_{q}^N $, then we get  
  	 \begin{align*}
  	 	I_{T_{\psi} (x)}^{P,\pi}  &= \langle supp_{\pi} (T_{\psi} (x)) \rangle \\ &= \langle supp_{\pi} \big( \sum\limits_{j,z} \eta _{j z} e_{\psi(j),z}  \big) \rangle \\& =\langle \{\psi{(j)} \in P : \eta _{j z} \neq 0 ~\text{for some } z \} \rangle \\ 
  	 	& = \langle \{\psi{(j)} \in P :  j \in supp_ {\pi} (x)\} \rangle 
  	 	\\ &= {\psi} (\langle supp_{\pi}  (x) \rangle) \\&= \psi(I _{x}^{P,\pi}).
  	 \end{align*}
   Since $\psi$ is an oder automorphism of $P$ then 
   $ \psi(M _{x}^{P,\pi}) = M_{T_{\psi} (x)}^{P,\pi} $. So,  $ \psi( I_{x}^{P,\pi} \setminus M _{x}^{P,\pi}) = I_{T_{\psi} (x)}^{P,\pi} \setminus  M_{T_{\psi} (x)}^{P,\pi}  $. Thus,
  \begin{align*}
  	w_{(P,w,\pi)} (T_{\psi}(x)) &=  \sum\limits_{ j \in M_{T_{\psi} (x)}^{P,\pi}}  \tilde{w}{(x_{\psi^{-1}(j)})} + \sum\limits_{j \in {I_{T_{\psi} (x)}^{P,\pi}} \setminus{ M_{T_{\psi} (x)}^{P,\pi}} } M_w  \\
  	&= \sum\limits_{ j \in {\psi( M _{x}^{P,\pi}} ) }  \tilde{w}{(x_{\psi^{-1}(j)})} + \sum\limits_{ j \in {\psi  (I _{x}^{P,\pi} \setminus  M _{x}^{P,\pi} }) } M_w \\
  		&= \sum\limits_{\jmath \in {M _{x}^{P,\pi}}  }  \tilde{w}{(x_{\jmath})} + \sum\limits_{\jmath \in {I _{x}^{P,\pi} \setminus  M _{x}^{P,\pi} } } M_w  \\
  		&=	w_{(P,w,\pi)} (x).
  \end{align*}
  Hence $T_{\psi}$ preserves $(P,w,\pi)$-weights. 
  \end{proof}
 Given an $X \subseteq P$, we define ${ (\mathbb{F}_{q}^N)}_X$ to be the subspace ${ (\mathbb{F}_{q}^N)}_X = \{ v \in  \mathbb{F}_{q}^N : supp_{\pi}  (v) \subseteq X\}$. In particular, if $ \tilde{w}( \gamma_{j z} ) =  \tilde{w} ( 1 )$ then $ \tilde{w}(\alpha_j \gamma_{j z} ) =  \tilde{w} ( \alpha_j )$ $\forall$ $\alpha_j \in  \mathbb{F}_{q}^{k_j}$. But if we consider $\alpha_j \in  \mathbb{Z}_{m}^{k_j}$ in place of $\alpha_j \in  \mathbb{F}_{q}^{k_j}$ then it  need not be true because it  contains zero divisors. 
\begin{proposition}\label{T collection}
	Let  $T : \mathbb{F}_{q}^N \rightarrow  \mathbb{F}_{q}^N $ be a linear isomorphism  such that for each $j \in [n]$, 
	\begin{equation*}
		T (e_{j,z}) = \gamma_{j z}  e_{ j,z } + v^{j}
	\end{equation*}
	where $  v^{j} \in (\mathbb{F}_{q}^N)_{{\langle {j} \rangle} ^{*}} $, $ \tilde{w}( \gamma_{j z} ) =  \tilde{w} ( 1 )$, and $ \tilde{w}(\alpha_j \gamma_{j z} ) =  \tilde{w} ( \alpha_j )$ $\forall$ $\alpha_j \in  \mathbb{F}_{q}^{k_j}$. Then
	$T$ is a linear isometry of  $ (\mathbb{F}_{q}^N , ~d_{( P,w,\pi)} )$.
\end{proposition}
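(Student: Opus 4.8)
The plan is to show directly that $w_{(P,w,\pi)}(T(x)) = w_{(P,w,\pi)}(x)$ for every $x \in \mathbb{F}_{q}^N$, following the template of the previous proposition (split the weight into the maximal part and the non-maximal part) but keeping track of the lower terms $v^{j}$.

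As a preliminary step I would record that, for each fixed $j$, the block-diagonal map $D_j : \mathbb{F}_{q}^{k_j} \to \mathbb{F}_{q}^{k_j}$ defined by $D_j(e_{j,z}) = \gamma_{jz} e_{j,z}$ is a $\tilde{w}^{k_j}$-isometry of $\mathbb{F}_{q}^{k_j}$, hence an isomorphism, and in particular each $\gamma_{jz} \neq 0$. Indeed, specializing the hypothesis $\tilde{w}(\alpha_j \gamma_{jz}) = \tilde{w}(\alpha_j)$ to vectors $\alpha_j$ supported on a single coordinate yields $w(\alpha \gamma_{jz}) = w(\alpha)$ for every $\alpha \in \mathbb{F}_q$ and every $z$, so $\tilde{w}^{k_j}(D_j(\alpha_j)) = \max_z w(\alpha_{j,z}\gamma_{jz}) = \max_z w(\alpha_{j,z}) = \tilde{w}^{k_j}(\alpha_j)$.

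Next I would prove $I_{T(x)}^{P,\pi} = I_{x}^{P,\pi}$, and hence $M_{T(x)}^{P,\pi} = M_{x}^{P,\pi}$, for $x = \sum_{j,z} \eta_{jz} e_{j,z}$. Writing $T(x) = \sum_{j,z} \eta_{jz}\gamma_{jz} e_{j,z} + \sum_{j,z}\eta_{jz} v^{j}$ and using $v^{j} \in (\mathbb{F}_{q}^N)_{\langle j\rangle^{*}}$, one gets $supp_{\pi}(T(e_{j,z})) \subseteq \langle j\rangle$, whence $I_{T(x)}^{P,\pi} = \langle supp_{\pi}(T(x))\rangle \subseteq \langle supp_{\pi}(x)\rangle = I_{x}^{P,\pi}$. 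For the reverse inclusion I would check that every $m \in M_{x}^{P,\pi}$ lies in $supp_{\pi}(T(x))$: since $m$ is maximal in $I_{x}^{P,\pi}$, every $j$ with $m \prec j$ satisfies $x_j = 0$ (otherwise $j \in supp_{\pi}(x) \subseteq I_{x}^{P,\pi}$, contradicting maximality), so the $m$-th block of $T(x)$ receives no contribution from any $v^{j}$ and equals $\sum_z \eta_{mz}\gamma_{mz} e_{m,z} = D_m(x_m)$, which is nonzero because $x_m \neq 0$ and $D_m$ is injective. Thus $M_{x}^{P,\pi} \subseteq supp_{\pi}(T(x))$, and since every ideal is generated by its maximal elements, $I_{x}^{P,\pi} = \langle M_{x}^{P,\pi}\rangle \subseteq I_{T(x)}^{P,\pi}$.

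Finally I would assemble the weight. The sum $\sum_{i \in I_{T(x)}^{P,\pi}\setminus M_{T(x)}^{P,\pi}} M_w$ coincides with the corresponding sum for $x$ by the previous step, and for each $m \in M_{T(x)}^{P,\pi} = M_{x}^{P,\pi}$ the $m$-th block of $T(x)$ is exactly $D_m(x_m)$, so $\tilde{w}^{k_m}\big((T(x))_m\big) = \tilde{w}^{k_m}(D_m(x_m)) = \tilde{w}^{k_m}(x_m)$ by the preliminary step; adding these up gives $w_{(P,w,\pi)}(T(x)) = w_{(P,w,\pi)}(x)$ for every $x$, so the linear isomorphism $T$ is a $(P,w,\pi)$-isometry. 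I expect the only delicate point to be the third paragraph — verifying that the contamination of a block $m$ by the terms $v^{j}$ with $j \succ m$ vanishes precisely at the maximal elements of $I_{x}^{P,\pi}$, so that $T$ restricts there to the block isometry $D_m$; the rest is bookkeeping with the definition of $w_{(P,w,\pi)}$ and the weight axioms.
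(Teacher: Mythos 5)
Your proposal is correct and follows essentially the same route as the paper's proof: both hinge on the observation that for a maximal element $m$ of $I_{x}^{P,\pi}$ the $m$-th block of $T(x)$ receives no contribution from the lower-order terms $v^{j}$ (since any contributing $j$ would satisfy $m \prec j$ with $j \in supp_{\pi}(x)$, contradicting maximality), so $T$ acts there as the diagonal block isometry, and the ideal and maximal-element sets of $x$ and $T(x)$ coincide. Your organization is in fact slightly tidier — you deduce $M_{T(x)}^{P,\pi} = M_{x}^{P,\pi}$ from the ideal equality $I_{T(x)}^{P,\pi} = I_{x}^{P,\pi}$ rather than via a separate maximality argument, and you avoid the paper's unjustified passing claim that $supp_{\pi}(x) \subseteq supp_{\pi}(T(x))$ — but the substance is the same.
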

\begin{proof}
	Since $			T (e_{j,z}) = \gamma_{j z}  e_{ j,z } + v^{j} $,  where $  v^{j} \in (\mathbb{F}_{q}^N)_{{\langle j \rangle} ^{*}}$  and $\tilde{w}(\alpha_j \gamma_{j z} ) =  \tilde{w} ( \alpha_j )$ $\forall$ $\alpha_j \in  \mathbb{F}_{q}^{k_j}$. If $ x = \sum\limits_{j,z} \Theta_{j z} e_{ j,z } $ then,
	\begin{equation*}
		T(x) = \sum\limits_{j,z} \Theta_{j z} T (e_{ j,z })= \sum\limits_{j,z} \eta_{j z}  e_{j,z} + \delta^{j}
	\end{equation*}
	where $  \eta_{j z} = \Theta_{j z} \gamma_{ j,z }  $, $\delta^{j} = \Theta_{j,z}  v^{j} \in (\mathbb{F}_{q}^N)_{{\langle j \rangle} ^{*}} $ and $ \tilde{w}( \eta_{j z} ) =  \tilde{w} ( \Theta_{j z} ) $ with $ \eta_{j z}  \neq 0 $ for all $j$ such that $ \Theta_{j z} \neq 0 $. Clearly, $supp_{\pi}(x) \subseteq supp_{\pi}{( T(x))}$
	\par
	Let $\delta^j = \delta_{1}^j + \delta_{2}^j + \ldots + \delta_{n}^j = \sum\limits_{i,z} \delta_{i z}^j e_{i, z}$ be the the canonical decomposition of $\delta^j$ in $ \mathbb{F}_{q}^{N}$.  Note that if $ \delta_{i z}^j \neq 0 $ means $ \delta_{i }^j\neq 0 $ then $i \prec_P j $ because  $\delta^{j} \in (\mathbb{F}_{q}^N)_{{\langle j \rangle} ^{*}} $. 
	\par If $ i \in M_x^{P,\pi}$ then all $ \delta_{i z}^k  $  are zero for each $k$, because if $ \delta_{i z}^k \neq 0 $
	then  $ \eta_{k z} \neq 0 $ and hence $ \Theta_{k z} \neq 0 $. Therefore $k \in supp_{\pi} (x) $ and $i \prec_P k$, but $i$ is maximal in $ supp_{\pi}  (x) $. 
	\par $T(x)$ can be written as 
	\begin{align*}
		T(x) &= \sum\limits_{j,z} (\eta_{j z}  e_{j,z} + (\sum\limits_{i,z} \delta_{i z}^j e_{i, z})) \\&= \sum\limits_{j,z} (\eta_{j z}  e_{j,z} + ( \delta_{1 z}^j e_{1, z} + \delta_{2 z}^j e_{2, z} + \cdots +\delta_{n z}^j e_{n, z})) \\&=
		\sum\limits_{\jmath,z} ( \eta_{\jmath z } + (\delta_{\jmath  z}^1 + \delta_{\jmath  z}^2 + \cdots + \delta_{\jmath z}^n )  )e_{\jmath,  z }
	\end{align*}
	Suppose that $ j \in M_x^{P,\pi}$ and  $ j \notin supp_{\pi}(T(x) )$ then $j^{th}$ term of $T(x)$, $
	\eta_{j z } + (\delta_{j  z}^1 + \delta_{j  z}^2 + \cdots + \delta_{j z}^n )  = 0 $.
	Since $ \delta_{j  z}^k = 0 $ for each $k$ so, $	\eta_{j  z} =0 $, a contradiction. Therefore  $ j \in supp_{\pi}(T(x)) $ and  $  M_{x}^{P,\pi} \subseteq supp_{\pi}(T(x)) $.
	\par Suppose the $i^{th}$ label of $ T (x) $  
	\begin{equation*}
		\eta_{i  z} + (\delta_{i  z}^1 + \delta_{i  z}^2 + \cdots + \delta_{i  z}^n )  
	\end{equation*}
	is maximal, $ i \in M_{T(x)}^{P,\pi}$. If $ \delta_{i z}^k \neq 0 $ then $k \in supp_{\pi}(x) $ and $  i \prec_P k \prec_P j$ for some $ j \in M_{x}^{P,\pi}  \subseteq supp_{\pi}(T(x)) $
	which implies $i$ is not maximal, a contradiction. Hence all $ \delta_{i z}^k = 0 $ for each $k$ and since $ \eta_{i z} \notin 0 $, we have that $i \in supp_{\pi}(x)$. If $ i \notin M_{x}^{P,\pi}$ then $  i  \prec_P j$ for some  
	$ j \in M_{x}^{P,\pi}  \subseteq supp_{\pi}(T(x)) $, which implies $ i \notin M_{T(x)}^{P,\pi} $, again a contradiction. Hence  $ i \in M_{x}^{P,\pi} $ and it follows that $  M_{T(x)}^{P,\pi} \subseteq M_{x}^{P,\pi}$. 
	\par Since $  M_{x}^{P,\pi} \subseteq supp_{\pi}(T(x)) $,  $  M_{T(x)}^{P,\pi}  \subseteq  M_{x}^{P,\pi}$ and $ \tilde{w}( \eta_{i z}  ) =  \tilde{w} ( \Theta_{i z}) $ for all $i$, thus
	$	w_{(P,w,\pi)} (x) = 	w_{(P,w,\pi)} (T(x))$. Therefore  $T$ is a linear isometry of  $ (\mathbb{F}_{q}^N , ~d_{( P,w,\pi)} )$.
\end{proof}
Let $\mathcal{T}$ be the set of all mapping defined in the previous Proposition \ref{T collection}. We will prove in  Theorem \ref{structureTheorem} that $\mathcal{T}$ is a subgroup of ${LIsom}_{(P,w,\pi)} (\mathbb{F}_{q}^N)$. We can also obtain a matrical version of this group.
\par Now, let $B = ( B_{{i_1}}, B_{{i_2}}, \ldots,B_{{i_n}} ) $ be a total ordering of the basis of  $ \mathbb{F}_{q}^N$ such that $B_{i_s}$ appears before $B_{i_r}$ whenever 
$  	w_{(P,w,\pi)}(e_{{i_s},j}) < w_{(P,w,\pi)}(e_{{i_r},j} ) $ for all  $ {i_r}, i_s = 1,2,\ldots , n $. Renaming the elements of $ P = ([n], \preceq)$ if necessary, we can  suppose that  $ {i_r} =r$ for all $r = 1,2,\ldots , n $. In this manner, $B = ( B_{1}, B_{2}, \ldots, B_{n} ) $
and if  
$  	w_{(P,w,\pi)}(e_{s,j} ) < w_{(P,w,\pi)}(e_{r,j} ) $  then  all elements of  $ B_{s}$ come before the elements of $B_r$ and $s \prec_P r $ or $s \preceq_P r $.
\begin{theorem}\label{matric version}
	Let $B = ( B_{1}, B_{2}, \ldots, B_{n} ) $ be the canonical basis of $\mathbb{F}_{q}^ {N}$ where  $  	w_{(P,w,\pi)}(e_{i,z} ) \leq w_{(P,w,\pi)}(e_{\jmath,z} ) $ implies $ i \preceq_P \jmath$.
	If $T \in \mathcal{T}$ then
	\begin{equation*}
		T(e_{i,z}) = \sum\limits_{i \preceq_P \jmath}\sum\limits_{t=1}^{k_i}  \eta_{i t}^{\jmath z} e_{i,t}
	\end{equation*}
	where each block $\bigg(\eta_{rt}^{rz} \bigg)_{1 \leq t \leq k_r} ^{1 \leq z \leq k_r}$, $r = 1,2,\ldots,n$, is an invertible matrix with $ \tilde{w}(\eta_{rt}^{rz}) =\tilde{w}(1) $ and $  \tilde{w}( \alpha \eta_{rt}^{rz}) =\tilde{w}(\alpha) $ for all $r \in [n]  $ and $\alpha \in \mathbb{F}_{q}^ {k_r}$. Every element of $ \mathcal{T}$ is represented as an upper-triangular matrix with respect to $ B $.
\end{theorem}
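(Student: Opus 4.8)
The plan is to read the matrix form of a member of $\mathcal{T}$ directly off two facts already available: the structural description in Proposition~\ref{T collection}, and the way the ordered basis $B$ was constructed in the paragraph preceding the statement. The only ingredient that is more than formal is that this ordering refines the partial order $\preceq_P$ on the blocks, and I would extract it from a weight computation. Since $supp_{\pi}(e_{i,z})=\{i\}$ we have $I_{e_{i,z}}^{P,\pi}=\langle i\rangle$ and $M_{e_{i,z}}^{P,\pi}=\{i\}$, so
\[
w_{(P,w,\pi)}(e_{i,z})=\tilde w^{k_i}(e_{i,z})+\bigl(|\langle i\rangle|-1\bigr)M_w=w(1)+\bigl(|\langle i\rangle|-1\bigr)M_w .
\]
This value depends only on $i$ and is strictly order-monotone: if $\jmath\prec_P i$ then $\langle\jmath\rangle\subsetneq\langle i\rangle$, hence $w_{(P,w,\pi)}(e_{\jmath,t})<w_{(P,w,\pi)}(e_{i,z})$ for all $t,z$; and equality of these weights together with $\jmath\preceq_P i$ would force $\langle\jmath\rangle=\langle i\rangle$, whence $\jmath=i$ since $i$ and $\jmath$ are the unique maximal elements of $\langle i\rangle$ and $\langle\jmath\rangle$. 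Because the blocks of $B=(B_1,\dots,B_n)$ are contiguous and arranged by non-decreasing weight, I conclude that every vector of $B_\jmath$ precedes every vector of $B_i$ whenever $\jmath\prec_P i$, and, in particular, that no block occurring strictly after $B_i$ in $B$ can be $\preceq_P i$.

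Next I would expand an arbitrary $T\in\mathcal{T}$ on the basis. By Proposition~\ref{T collection}, $T(e_{i,z})=\gamma_{iz}e_{i,z}+v^{i}$ with $v^{i}\in(\mathbb F_q^N)_{\langle i\rangle^{*}}$, so $v^{i}$ is supported only on blocks $\jmath$ with $\jmath\prec_P i$; writing the canonical decomposition of $v^{i}$ and collecting it with the block-$i$ term gives the asserted expansion
\[
T(e_{i,z})=\sum_{\jmath\preceq_P i}\ \sum_{t}\eta_{\jmath t}^{\,iz}\,e_{\jmath,t},
\]
in which no block with $\jmath\not\preceq_P i$ occurs. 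Combined with the first paragraph, every basis vector in the support of $T(e_{i,z})$ sits in a block no later than $B_i$, and inside $B_i$ only $e_{i,z}$ itself occurs (its block-$i$ component being $\gamma_{iz}e_{i,z}$); hence $[T]_{B}$ is upper-triangular, and its $B_r\times B_r$ diagonal block is exactly $(\eta_{rt}^{\,rz})_{t,z}$, the restriction to block $r$ of the within-block action of $T$.

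Finally I would record the properties of the diagonal blocks. The weight constraints $\tilde w(\eta_{rz}^{\,rz})=\tilde w(1)$ and $\tilde w(\alpha\,\eta_{rz}^{\,rz})=\tilde w(\alpha)$ for every $\alpha\in\mathbb F_q^{k_r}$ are precisely what Proposition~\ref{T collection} imposes on the block-$r$ component of $T(e_{r,z})$, so nothing new is required there; and $(\eta_{rt}^{\,rz})_{t,z}$ is invertible because $T$ is a linear isomorphism while $[T]_{B}$ is upper-triangular, so $\det[T]_{B}$ equals the product of the determinants of the diagonal blocks, none of which can vanish. The part I expect to be the real work is the first paragraph — checking that the weight ordering of $B$ genuinely refines $\preceq_P$, i.e.\ the monotonicity of $i\mapsto w_{(P,w,\pi)}(e_{i,z})$ together with the ``equal weight $\Rightarrow$ equal order ideal'' step; after that, all that is left is bookkeeping with the canonical decomposition of $v^{i}$.
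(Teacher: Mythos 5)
Your proposal is correct and follows essentially the same route as the paper: expand $T(e_{i,z})$ via Proposition~\ref{T collection}, use the support condition $v^{i}\in(\mathbb{F}_{q}^{N})_{\langle i\rangle^{*}}$ together with the weight-compatible ordering of $B$ to get upper-triangularity, deduce invertibility of the diagonal blocks from the factorization of $\det[T]_{B}$ into the diagonal-block determinants, and read the weight constraints off the definition of $\mathcal{T}$ (the paper cites Proposition~\ref{w(u)=w(1)} for that last point). Your opening computation showing that the weight ordering of $B$ genuinely refines $\preceq_P$ is a small addition that the paper leaves implicit in its construction of $B$, but it does not change the argument.
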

  \begin{proof}
  	Since $T \in \mathcal{T}$ we have that $ T( \mathbb{F}_{q}^{k_i} ) \subseteq (\mathbb{F}_{q}^N)_{{\langle i \rangle} ^{*}} $. So
  	\begin{equation*}
  		\begin{split}
  			T( e_{1,1} ) &= \eta_{11}^{11} e_{1,1} + \eta_{12}^{11} e_{1,2} + \cdots + \eta_{1{k_1}}^{11} e_{1,{k_1}} \\
  			T( e_{1,2} ) &= \eta_{11}^{12} e_{1,1} + \eta_{12}^{12} e_{1,2} + \cdots + \eta_{1{k_1}}^{12} e_{1,{k_1}} \\
  			&\vdots \\
  				T( e_{1,{k_1}} ) &= \eta_{11}^{1{k_1}} e_{1,1} + \eta_{12}^{1{k_1}} e_{1,2} + \cdots + \eta_{1{k_1}}^{1{k_1}} e_{1,{k_1}} 
  				\\
  	T( e_{2,1} ) &= (\eta_{11}^{21} e_{1,1} + \eta_{12}^{21} e_{1,2} +\cdots+\eta_{1{k_1}}^{21} e_{1,{k_1}}) + \\ & \hspace{5.2cm} (\eta_{21}^{21} e_{2,1} + \eta_{22}^{21} e_{2,2} +\cdots+\eta_{2{k_2}}^{21} e_{2,{k_2}})  \\
  	T( e_{2,2} ) &= (\eta_{11}^{22} e_{1,1} + \eta_{12}^{22} e_{1,2} + \cdots + \eta_{1{k_1}}^{22}e_{1,{k_1}})  + \\ & \hspace{5.2cm}  (\eta_{21}^{22} e_{2,1} + \eta_{22}^{22} e_{2,2} + \cdots + \eta_{2{k_2}}^{22}e_{2,{k_2}}) \\
  	&\vdots \\
  	T( e_{2,{k_2}} ) &= (\eta_{11}^{2{k_2}} e_{1,1} + \eta_{12}^{2{k_2}} e_{1,2} + \cdots + \eta_{1{k_1}}^{2{k_2}}e_{1,{k_1}})  + \\ & \hspace{5.2cm} (\eta_{21}^{2{k_2}} e_{2,1} + \eta_{2{k_2}}^{2{k_2}} e_{2,2} + \cdots + \eta_{2{k_2}}^{2{k_2}}e_{2,{k_2}})  \\
  	&\vdots  	  \\  
  	T( e_{n,1} ) &= (\eta_{11}^{n1} e_{1,1} + \eta_{12}^{n1} e_{1,2} +\cdots+\eta_{1{k_1}}^{n1} e_{1,{k_1}}) + \cdots + \\ & \hspace{4.8cm} (\eta_{n1}^{n1} e_{n,1} + \eta_{n2}^{n1} e_{n,2} +\cdots+\eta_{n{k_n}}^{n1} e_{n,{k_n}}) \\
  	T( e_{n,2} ) &= (\eta_{11}^{n2} e_{1,1} + \eta_{12}^{n2} e_{1,2} + \cdots + \eta_{1{k_1}}^{n2} e_{1,{k_1}})  +\cdots+ \\ & \hspace{5.2cm} (\eta_{21}^{n2} e_{2,1} + \eta_{22}^{n2} e_{2,2} + \cdots + \eta_{2{k_2}}^{n2} e_{2,{k_2}}) \\
  	&\vdots \\
  	T( e_{n,{k_n}} ) &= (\eta_{11}^{n{k_n}} e_{n,1} + \eta_{12}^{n{k_n}} e_{n,2} + \cdots + \eta_{1{k_1}}^{n{k_n}} e_{n,{k_1}}) +\cdots+ \\ & \hspace{5.2cm}  (\eta_{n1}^{n{k_n}} e_{n,1} + \eta_{n{2}}^{n{k_n}} e_{n,2} + \cdots + \eta_{n{k_n}}^{n{k_n}} e_{n,{k_n}}) 
  \end{split}
\end{equation*}
  	where $ ( \eta_{s1}^{ij}, \eta_{s2}^{ij}, \dots, \eta_{s{k_s}}^{ij} )  = 0 $  if $ s \nleq i $ and  $ ( \eta_{s1}^{ij}, \eta_{s2}^{ij}, \dots, \eta_{s{k_s}}^{ij} ) \neq 0 $ for all $ i \in \{ 1,2, \dots , n\} $.
  	Therefore, if $[T]_{B_r}^i  = \big( \eta_{i\jmath}^{rz} \big)_{ 1 \leq \jmath \leq k_i} ^{ 1 \leq z \leq k_r}  $, $r$, $i \in \{ 1,2, \dots , n\} $. Then the matrix  $[T]_{B} $ of $T $ 
  relative to  the base $B$ has the form
  \begin{equation*}
  	[T]_{B}  = \begin{bmatrix}
  	&	[T]_{B_1}^1 &  [T]_{B_2}^1  &  [T]_{B_3}^1  & \ldots & [T]_{B_n}^1 \\
  & 	0 &  [T]_{B_2}^2  &  [T]_{B_2}^2 & \ldots &[T]_{B_n}^2  \\
  &		0 &  0  & [T]_{B_2}^3 &\ldots  & [T]_{B_n}^3  \\
  & \vdots  &  \vdots & \vdots  & \ddots & \vdots \\
  &		0 &  0  &  0   &\ldots  & [T]_{B_n}^n
  	\end{bmatrix}
  \end{equation*}
  where  $[T]_{B_r}^i  =  0  $  if $ i \nleq r$  and  $[T]_{B_r}^r  \neq  0  $ for all $ r \in \{ 1,2, \dots , n\} $. To see that each $[T]_{B_r}^i$ is invertible, we notice that $[T]_{B_r}$ is invertible, so that $0 \neq det( [T]_{B_r})$. But  $ det( [T]_{B_r}) = \prod\limits_{i} det( [T]_{B_r})^i  $ and it follows that each $[T]_{B_r}^i$ is an invertible matrix. Since $T \in \mathcal{T}$ is a weight preserving so that from Proposition \ref{w(u)=w(1)}, we have $ \tilde{w}(\eta_{rt}^{rz}) =\tilde{w}(1) $ and $  \tilde{w}( \alpha \eta_{rt}^{rz}) =\tilde{w}(\alpha) $ for all $r \in [n]  $ and $\alpha \in \mathbb{F}_{q}^ {k_r}$.
  \end{proof}
	\begin{remark}
		Let $I$ and $J$ be two ideals of $P =	([n], \preceq )$.  If $I \subseteq J$ then $I \setminus M_I \subseteq J \setminus M_J $
	\end{remark}
	\begin{proposition}\label{Iso wt ej less wt ei}
		Let $v_j  \neq 0$ be the $j^{th} $ label of $ T(\beta_i e_{i,z})$ and  $T \in LIsom_{(P,w,\pi)}(\mathbb{F}_{q}^N)$. If  $\alpha_{j} \in \mathbb{F}_{q}^{k_j}$ such that $  \tilde{w}( \alpha _{j})  \leq \tilde{w}( v_j ) $  then  $ 	w_{(P,w,\pi)} {(  \alpha_{j} e_{j,z})} \leq 	w_{(P,w,\pi)} {( \beta_i e_{i,z})}  $.
	\end{proposition}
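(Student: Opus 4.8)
The plan is to leverage that $T$ is a $(P,w,\pi)$-isometry, so that $w_{(P,w,\pi)}(T(\beta_i e_{i,z})) = w_{(P,w,\pi)}(\beta_i e_{i,z})$, and to show that the left-hand side already dominates $w_{(P,w,\pi)}(\alpha_j e_{j,z})$. Setting $y = T(\beta_i e_{i,z})$, $I = I_y^{P,\pi}$ and $M = M_y^{P,\pi}$, the hypothesis $v_j \neq 0$ forces $j \in supp_{\pi}(y) \subseteq I$; since $I$ is an ideal, the principal ideal $\langle j \rangle$ lies in $I$, and by the remark preceding this proposition (applied to $\langle j \rangle \subseteq I$, where $j$ is the unique maximal element of $\langle j \rangle$) we obtain $\langle j \rangle \setminus \{j\} \subseteq I \setminus M$.

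Next I would isolate, inside the defining sum of $w_{(P,w,\pi)}(y)$, the contribution of the coordinate $j$ and that of the coordinates in $\langle j \rangle \setminus \{j\}$. Using the elementary bound $\tilde{w}^{k}(v) \leq M_w$ for $v \in \mathbb{F}_q^k$ (immediate from the definitions of $\tilde{w}^{k}$ and $M_w$), the coordinate $j$ contributes $\tilde{w}^{k_j}(v_j)$ if $j \in M$, and $M_w \geq \tilde{w}^{k_j}(v_j)$ if $j \in I \setminus M$; in either case at least $\tilde{w}^{k_j}(v_j)$. Each of the coordinates $k \in \langle j \rangle \setminus \{j\}$ lies in $I \setminus M$ and contributes exactly $M_w$. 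As these $|\langle j \rangle|$ coordinates are distinct elements of $I$ and all other summands in the formula for $w_{(P,w,\pi)}(y)$ are nonnegative, this yields
\[
w_{(P,w,\pi)}(y) \;\geq\; \tilde{w}^{k_j}(v_j) + \big(|\langle j \rangle| - 1\big)\, M_w .
\]

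Finally, a direct computation gives $w_{(P,w,\pi)}(\alpha_j e_{j,z}) = \tilde{w}^{k_j}(\alpha_j) + (|\langle j \rangle| - 1)\, M_w$ (its $\pi$-support is $\{j\}$, with generated ideal $\langle j \rangle$ whose unique maximal element is $j$; the case $\alpha_j = 0$ being trivial). Combining this with the hypothesis $\tilde{w}^{k_j}(\alpha_j) \leq \tilde{w}^{k_j}(v_j)$, the displayed inequality, and the isometry identity $w_{(P,w,\pi)}(y) = w_{(P,w,\pi)}(\beta_i e_{i,z})$ closes the argument. I do not anticipate a serious obstacle; the one step deserving attention is the case split at $j$ together with the inclusion $\langle j \rangle \setminus \{j\} \subseteq I \setminus M$, which is precisely what makes the $M_w$-terms for the strict predecessors of $j$ genuinely available in the sum and disjoint from the term accounting for $j$ itself.
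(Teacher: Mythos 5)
Your proof is correct and follows essentially the same route as the paper's: both arguments rest on the inclusion $\langle j \rangle \setminus \{j\} \subseteq I_{T(\beta_i e_{i,z})}^{P,\pi} \setminus M_{T(\beta_i e_{i,z})}^{P,\pi}$ supplied by the preceding Remark, the elementary bound $\tilde{w}^{k_j}(v_j) \leq M_w$, and the isometry identity $w_{(P,w,\pi)}(T(\beta_i e_{i,z})) = w_{(P,w,\pi)}(\beta_i e_{i,z})$. The only difference is presentational: you make explicit the case split on whether $j$ is maximal in $I_{T(\beta_i e_{i,z})}^{P,\pi}$, a step the paper's middle inequality leaves implicit.
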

	\begin{proof}
  Since $ I_{v_{jz} e_{j,z}}^{P,\pi} \subseteq I_{T( \beta e_{i, z} )}^{P,\pi}$ so that $ I_{v_{jz} e_{j,z}}^{P,\pi} \setminus M_{v_{jz} e_{j,z}}^{P,\pi} \subseteq I_{T( \beta e_{i, z} )}^{P,\pi} \setminus M_{T( \beta e_{i, z} )}^{P,\pi} $. Thus, 
		  \begin{align*}
		  	  	w_{(P,w,\pi)} {(  \alpha_{j} e_{j,z})} 	&= \tilde{w}(\alpha_{j}) +  \sum\limits_{  k \in I_{\alpha_{j}  e_{j,z}}^{P,\pi} \setminus {M_{\alpha_{j}  e_{j,z}}^{P,\pi} }} M_w \\
		  	  &	\leq  \tilde{w}(v_{j}) +  \sum\limits_{  k \in I_{\alpha_{j}  e_{j,z}}^{P,\pi} \setminus {M_{\alpha_{j}  e_{j,z}}^{P,\pi} }} M_w \\
		  	 &\leq  \sum\limits_{  k \in M_{T(\beta_{i}  e_{i,z}) }^{P,\pi}} \tilde{w}(v_{k}) +  \sum\limits_{  k \in I_{T(\beta_{i}  e_{i,z})}^{P,\pi} \setminus {M_{T(\beta_{i}  e_{i,z})}^{P,\pi} }} M_w \\
		  	  	&= 	w_{(P,w,\pi)} {( \beta_i e_{i,z})}
		  \end{align*} 
	\end{proof}
\begin{proposition}\label{isom remark}
	If $ 	w_{(P,w,\pi)} {(  \alpha_i e_{i,z})} =	w_{(P,w,\pi)} {( \beta_j e_{j,z})}  $, then $ \tilde{w}(\alpha_i)  = \tilde{w}(\beta_j) $.	
\end{proposition}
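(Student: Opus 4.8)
The plan is to reduce the claim to the closed-form expression for the $(P,w,\pi)$-weight of a vector supported on a single block, and then to finish with an elementary divisibility argument. First I would dispose of the degenerate cases: if $\alpha_i = 0$, then $w_{(P,w,\pi)}(\alpha_i e_{i,z}) = 0$, which forces $w_{(P,w,\pi)}(\beta_j e_{j,z}) = 0$ and hence $\beta_j = 0$ by property $(b)$ of a weight, so that $\tilde w(\alpha_i) = 0 = \tilde w(\beta_j)$; the case $\beta_j = 0$ is symmetric. Thus I may assume $\alpha_i \neq 0$ and $\beta_j \neq 0$.

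Next, I would compute both weights explicitly. The vector $\alpha_i e_{i,z}$ is supported on block $i$, so its $\pi$-support is the singleton $\{i\}$, whence $I_{\alpha_i e_{i,z}}^{P,\pi} = \langle i \rangle$; moreover the unique maximal element of the principal ideal $\langle i \rangle$ is $i$ itself. Therefore, by the definition of $w_{(P,w,\pi)}$,
\[
w_{(P,w,\pi)}(\alpha_i e_{i,z}) = \tilde w(\alpha_i) + \big(|\langle i \rangle| - 1\big) M_w ,
\]
and likewise $w_{(P,w,\pi)}(\beta_j e_{j,z}) = \tilde w(\beta_j) + \big(|\langle j \rangle| - 1\big) M_w$. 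Equating the two, as in the hypothesis, gives
\[
\tilde w(\alpha_i) - \tilde w(\beta_j) = \big(|\langle j \rangle| - |\langle i \rangle|\big) M_w .
\]

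Finally I would run the bounding argument. Since $\tilde w = \tilde w^{k}$ is the block weight induced by $w$ and $M_w = \max\{w(\alpha) : \alpha \in \mathbb{F}_q\}$, every nonzero block vector $v$ satisfies $1 \leq \tilde w(v) \leq M_w$; in particular $M_w \geq 1$ (apply this to $v=1$). Consequently the left-hand side of the displayed identity lies in the interval $[\,1 - M_w,\; M_w - 1\,]$, so its absolute value is at most $M_w - 1 < M_w$. But the right-hand side is an integer multiple of $M_w$, and the only such multiple whose absolute value is strictly less than $M_w$ is $0$. Hence $|\langle i \rangle| = |\langle j \rangle|$ and $\tilde w(\alpha_i) = \tilde w(\beta_j)$, as claimed. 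I do not anticipate a genuine obstacle here; the single point requiring care is that the bound on $|\tilde w(\alpha_i) - \tilde w(\beta_j)|$ has to be \emph{strictly} below $M_w$, which is precisely why the non-vanishing of $\alpha_i$ and $\beta_j$ (hence $\tilde w(\alpha_i), \tilde w(\beta_j) \geq 1$) had to be secured at the outset.
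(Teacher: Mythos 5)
Your proof is correct and follows essentially the same route as the paper's: both express $w_{(P,w,\pi)}(\alpha_i e_{i,z})$ as $\tilde w(\alpha_i)$ plus a multiple of $M_w$ counting $\langle i\rangle\setminus\{i\}$, observe that the difference of the two block weights is an integer multiple of $M_w$, and force that multiple to be zero via the bound $0<\tilde w(\cdot)\le M_w$. The only difference is that you explicitly dispose of the case $\alpha_i=0$ or $\beta_j=0$, which the paper simply assumes away by restricting to nonzero elements.
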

\begin{proof}
	 For $ 0 \neq \alpha \in  \mathbb{F}_{q}^{k_i}$ and $  0 \neq \beta \in  \mathbb{F}_{q}^{k_\jmath}$. Then
	\begin{equation*}
		\begin{split}
			\tilde{w}(\alpha_i) + \sum\limits_{  k \in I_{\alpha_i  e_{i,z}}^{P,\pi} \setminus \{i \}} M_w	
			&=  \tilde{w}(\beta_j) + \sum\limits_{  k \in I_{\beta  e_{j,z}}^{P,\pi} \setminus \{j \}} M_w	\\
			\tilde{w}(\alpha_i) - \tilde{w}(\beta_j)  
			&=  \sum\limits_{  k \in I_{\beta_j  e_{j,z}}^{P,\pi} \setminus \{j \}} M_w -  \sum\limits_{  k \in I_{\alpha_i  e_{i,z}}^{P,\pi} \setminus \{i \}} M_w	
			\\
			&= t M_w  ~~~~~~~~\text{(for some integer }t)
		\end{split}
	\end{equation*}
	 Since $ 0 <  \tilde{w}(\alpha) \leq M_w$ and  $ 0 <  \tilde{w}(\beta) \leq M_w$, thus $| \tilde{w}(\alpha) - \tilde{w}(\beta) | < M_w $. So $t$ must be zero. Hence  $ \tilde{w}(\alpha)  = \tilde{w}(\beta) $.
\end{proof}
	\begin{proposition}\label{w(u)=w(1)}
		Let $T \in LIsom_{(P,w,\pi)}(\mathbb{F}_{q}^N)$ and  $\alpha_{\jmath}$ be $\jmath ^{th}$  label of $ {T(e_{i,z})} $ If  $\jmath$ is the maximal element in
		$ I_{T(e_{i,z})} ^{P,\pi}$. Then $ \tilde{w}(u_\jmath)  = \tilde{w}(1) $.
	\end{proposition}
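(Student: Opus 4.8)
The plan is to compare the $(P,w,\pi)$-weights of $e_{i,z}$ and of $T(e_{i,z})$ directly and then to kill an integer multiple of $M_w$ by a size estimate, exactly in the spirit of the proof of Proposition~\ref{isom remark}.

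First I would compute the left weight. Since $e_{i,z}$ is supported on the single block $i$, whose block weight is $w(1)=\tilde w(1)$, and $i$ is the unique maximal element of $\langle i\rangle$, we get $w_{(P,w,\pi)}(e_{i,z})=\tilde w(1)+\bigl(|\langle i\rangle|-1\bigr)M_w$. For the right side write $v:=T(e_{i,z})$, with $\jmath$-th label $\alpha_\jmath=v_\jmath$. By hypothesis $\jmath$ is the (unique) maximal element of $I_v^{P,\pi}$, so $M_v^{P,\pi}=\{\jmath\}$; and a maximal element of the ideal generated by $\mathrm{supp}_\pi(v)$ lies in $\mathrm{supp}_\pi(v)$, so $v_\jmath\neq 0$. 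Hence $w_{(P,w,\pi)}(v)=\tilde w(v_\jmath)+\bigl(|I_v^{P,\pi}|-1\bigr)M_w$ (in fact $I_v^{P,\pi}=\langle\jmath\rangle$, but this is not needed).

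Now I would invoke that $T$ is a linear isometry, so $w_{(P,w,\pi)}(v)=w_{(P,w,\pi)}(e_{i,z})$, which rearranges to
\[
\tilde w(v_\jmath)-\tilde w(1)=\bigl(|\langle i\rangle|-|I_v^{P,\pi}|\bigr)M_w .
\]
The right-hand side is an integer multiple of $M_w$, while the left-hand side has absolute value strictly less than $M_w$, because $0<\tilde w(1)\le M_w$ and $0<\tilde w(v_\jmath)\le M_w$ (strict positivity since $v_\jmath\neq 0$). Therefore both sides vanish, so $\tilde w(u_\jmath)=\tilde w(\alpha_\jmath)=\tilde w(v_\jmath)=\tilde w(1)$, and incidentally $|I_v^{P,\pi}|=|\langle i\rangle|$.

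The two weight computations and the strict-positivity remark are routine; the only delicate point is the step "$\jmath$ maximal $\Rightarrow M_{T(e_{i,z})}^{P,\pi}=\{\jmath\}$", that is, that the ideal of $T(e_{i,z})$ has a single maximal element (otherwise $\tilde w(v_\jmath)$ is not isolated in the weight formula for $v$). Read literally, the statement places this in the hypothesis and the argument above is complete; if instead it has to be established, I expect that to be the real obstacle, and I would obtain it by applying Propositions~\ref{Iso wt ej less wt ei} and~\ref{isom remark} to $T$ and to $T^{-1}$ together with the scalar identity $w_{(P,w,\pi)}(\lambda\,T(e_{i,z}))=w_{(P,w,\pi)}(\lambda e_{i,z})$ for $\lambda\in\mathbb{F}_q\setminus\{0\}$, which already forces $|I_{T(e_{i,z})}^{P,\pi}|=|\langle i\rangle|$ and tightly restricts the possible maximal elements.
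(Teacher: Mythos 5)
Your proof is correct and follows essentially the same route as the paper: equate $w_{(P,w,\pi)}(e_{i,z})$ with $w_{(P,w,\pi)}(T(e_{i,z}))$, use the uniqueness of the maximal element $\jmath$ to isolate $\tilde w(v_\jmath)$, and kill the residual integer multiple of $M_w$ by the bound $0<\tilde w(\cdot)\le M_w$ (the paper outsources this last step to Proposition~\ref{isom remark}, which you reprove inline). Your closing remark correctly identifies that uniqueness of the maximal element is taken as a hypothesis here and is only established later, in Lemma~\ref{Isom prime ideal}.
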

\begin{proof}
	Since $ w_{(P,w,\pi)} {( e_{i,z} )} = 	w_{(P,w,\pi)} {( T( e_{i,z} ) )} =	w_{(P,w,\pi)} {(  \alpha_{j} e_{j,z})} $, it follows that $ \tilde{w}(u_\jmath)  = \tilde{w}(1) $.
\end{proof}
\section{Group of Linear isometries}
Considering the two subgroups $\mathcal{I}mg(\Gamma)$ and $\mathcal{T}$ constructed in the previous section, we aim to describe the group of linear isometries of  $(\mathbb{F}_{q}^N)$.  An ideal $I$ of a poset $ P $ is said to be a prime ideal if it contains a unique maximal element. 
\begin{lemma}\label{Isom prime ideal}
	If  $T \in {LIsom}_{(P,w,\pi)} (\mathbb{F}_{q}^N)$ and $0 \neq \alpha_{iz} \in  \mathbb{F}_{q}^{k_i} $ then $\langle  supp_{\pi}  (T( \alpha_{iz} e_{i,z})) \rangle $ is a prime ideal for every $i \in  \{ 1,2,\ldots,n\} $.
\end{lemma}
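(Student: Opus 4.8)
Write $v=\alpha_{iz}e_{i,z}$ and $u=T(v)$, so that $supp_{\pi}(v)=\{i\}$, and for $k\in[n]$ put $r_k=|\langle k\rangle|$ (the ``level'' of $k$). Since an ideal is prime exactly when its set of maximal elements is a singleton, the plan is to let $M_u$ be the set of maximal elements of $I_u:=\langle supp_{\pi}(u)\rangle$ and to show $|M_u|=1$ by ruling out $|M_u|\ge 2$. I would combine two ingredients: a family of weight identities produced by pushing the scalar multiples $\lambda v$ ($0\ne\lambda\in\mathbb{F}_q$) through $T$, and Proposition~\ref{Iso wt ej less wt ei}, which forces the level of every block in $supp_{\pi}(T(v'))$ to be at most the level of the block carrying $v'$.

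First, for each $0\ne\lambda\in\mathbb{F}_q$ the vector $\lambda v$ is still supported in block $i$ and $\lambda u=T(\lambda v)$, so $w_{(P,w,\pi)}(\lambda u)=w_{(P,w,\pi)}(\lambda v)$; expanding both sides (using $I_{\lambda v}=\langle i\rangle$, whose unique maximal element is $i$, and $supp_{\pi}(\lambda u)=supp_{\pi}(u)$) would yield
\[
\sum_{j\in M_u}\tilde{w}(\lambda u_j)=\tilde{w}(\lambda v_i)+C\,M_w\qquad(0\ne\lambda\in\mathbb{F}_q),
\]
where $C=(r_i-1)-(|I_u|-|M_u|)$ is independent of $\lambda$. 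Next, applying Proposition~\ref{Iso wt ej less wt ei} to $T$ and $v$ (taking $u_j$ in the role of $\alpha_j$) gives $r_j\le r_i$ for all $j\in supp_{\pi}(u)$; I set $r^{*}:=\max\{r_k:k\in supp_{\pi}(u)\}\le r_i$ and fix $j^{*}\in supp_{\pi}(u)$ with $r_{j^{*}}=r^{*}$, which must then be a maximal element of $I_u$, i.e.\ $j^{*}\in M_u$.

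The argument then splits. \emph{Case $r^{*}<r_i$.} Write $u=\sum_{j,z}\eta_{jz}e_{j,z}$, the nonzero terms having $j\in supp_{\pi}(u)$, so $v=T^{-1}(u)=\sum_{j,z}\eta_{jz}T^{-1}(e_{j,z})$. Applying Proposition~\ref{Iso wt ej less wt ei} to the isometry $T^{-1}$ and to each such $e_{j,z}$ (whose block $j$ has level $r_j\le r^{*}<r_i$) shows that every block in $supp_{\pi}(T^{-1}(e_{j,z}))$ has level $<r_i$; hence every block in $supp_{\pi}(v)\subseteq\bigcup_{j,z}supp_{\pi}(T^{-1}(e_{j,z}))$ has level $<r_i$, contradicting $i\in supp_{\pi}(v)$. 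So $r^{*}=r_i$. \emph{Case $r^{*}=r_i$.} Since $\langle j^{*}\rangle\setminus\{j^{*}\}\subseteq I_u\setminus M_u$ we get $|I_u|-|M_u|\ge r^{*}-1=r_i-1$, hence $C\le 0$; evaluating the identity above at a $\lambda^{*}$ for which some coordinate of $\lambda^{*}v_i$ has weight $M_w$ (possible as $v_i\ne 0$) makes its left-hand side equal $(1+C)M_w$, a sum of positive integers, forcing $C\ge 0$. Thus $C=0$, and the identity becomes $\sum_{j\in M_u}\tilde{w}(\lambda u_j)=\tilde{w}(\lambda v_i)\le M_w$ for all $\lambda$ (block weights never exceed $M_w$). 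If now $|M_u|\ge 2$, pick distinct $j_1,j_2\in M_u$ and nonzero entries $c_1$ of $u_{j_1}$ and $c_2$ of $u_{j_2}$: then $w(\lambda c_1)+w(\lambda c_2)\le\tilde{w}(\lambda u_{j_1})+\tilde{w}(\lambda u_{j_2})\le M_w$ for every $0\ne\lambda\in\mathbb{F}_q$, and choosing $\lambda$ with $w(\lambda c_1)=M_w$ forces $w(\lambda c_2)\le 0$, which is impossible. Hence $|M_u|=1$, i.e.\ $\langle supp_{\pi}(T(\alpha_{iz}e_{i,z}))\rangle$ is prime.

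The step I expect to be the genuine obstacle is forcing $C=0$: elementary weight bookkeeping only gives $0\le C\le|M_u|-1$, so a priori $C$ could be positive, and one really needs the level bound of Proposition~\ref{Iso wt ej less wt ei} — first to exclude $r^{*}<r_i$, then to squeeze $C$ down to $0$. The scaling inequality $w(\lambda c_1)+w(\lambda c_2)\le M_w$, which is what actually eliminates a second maximal element, only becomes available after $C=0$; before that, the $\lambda$-family of identities carries too little information. A secondary point needing care is carrying out the $r^{*}<r_i$ case coordinate-freely through $T^{-1}$ rather than by an induction on levels.
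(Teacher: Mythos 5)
Your proof is correct, but it takes a genuinely different route from the paper's. The paper's argument first picks $\beta$ minimizing $\tilde{w}$ over the nonzero elements of $\mathbb{F}_{q}^{k_i}$ and shows, by pulling the individual labels of $T(\beta e_{i,z})$ back through $T^{-1}$ and invoking Proposition~\ref{Iso wt ej less wt ei}, that some label $v_j$ must satisfy $w_{(P,w,\pi)}(v_j e_{j,z}) = w_{(P,w,\pi)}(\beta e_{i,z})$ (otherwise one produces an element of $\mathbb{F}_q^{k_i}$ of weight strictly below the minimum); the weight equation then forces that $j$ to be the unique maximal element, and the general $\alpha_{iz}$ is handled by equality of supports. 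You instead extract from the whole family of scalar multiples $\lambda v$ the identity $\sum_{j\in M_u}\tilde{w}(\lambda u_j)=\tilde{w}(\lambda v_i)+C M_w$, pin down $C=0$ by a level count --- excluding $r^{*}<r_i$ with essentially the same $T^{-1}$/Proposition~\ref{Iso wt ej less wt ei} device the paper uses, then saturating $\tilde{w}(\lambda^{*}v_i)=M_w$ --- and finally eliminate a second maximal element by choosing $\lambda$ so that one entry reaches $M_w$ and the other is squeezed to weight $0$. Both proofs hinge on Proposition~\ref{Iso wt ej less wt ei} together with the ability to rescale within $\mathbb{F}_q^{*}$ to reach an extreme weight (the minimum for the paper, the maximum $M_w$ for you; both uses need $\mathbb{F}_q$ to be a field, as the paper itself remarks about $\mathbb{Z}_m$). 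Your version is more quantitative: it makes the bookkeeping constant $C$ explicit and shows exactly where primality is forced, at the cost of the extra case analysis on levels, whereas the paper's minimal-$\beta$ trick reaches the dominant label more directly but leaves the final ``such an element $j$ is unique'' step rather terse.
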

 \begin{proof}
 	Let  $0 \neq \alpha_{iz} \in  \mathbb{F}_{q}^{k_i} $ and $\tilde{w}(\beta) = min\{ \tilde{w}( \alpha_{iz} )  :  \alpha_{iz} \in \mathbb{F}_{q}^{k_i} \}$. We will first show that there is an element  $  j
 	\in \langle supp_{\pi}  ( T( \beta  e_{i,z} ) ) \rangle $  such that  
 	\begin{equation*}
 		w_{(P,w,\pi)} ( v_j  e_{j,z}) = w_{(P,w,\pi)} ( \beta e_{i,z} ) 
 	\end{equation*} 
 \sloppy{where $v_j $ is the $j^{th} $ label of $  T( \beta e_{i,z} ) $.  Assume that $
 	w_{(P,w,\pi)} ( v_j  e_{j,z}) <  w_{(P,w,\pi)} ( \beta e_{i,z} ) $ 
 	for every label  $v_{j} \neq 0  $ of $  T( \beta e_{i,z} )  $. If $  supp_{\pi}  ( T( \beta e_{i,z} ) ) = \{i_1,i_2,\ldots,i_s \}  $.  Then} 
 	\begin{equation*}
 		T ( \beta e_{i,z} ) =  v_{i_1} e_{{i_1},z} +  v_{i_2} e_{{i_2},z} + \ldots +  v_{i_s} e_{{i_s},z} 
 	\end{equation*}
 	where $ v_{ i_t} \in  \mathbb{F}_{q}^{k_{i_t}} $ for $ t \in  \{ 1,2,\ldots,s\} $ and,
 	by assumption, $ 	w_{(P,w,\pi)} ( v_{i_t} e_{i_t,z} ) <  w_{(P,w,\pi)} ( \beta e_{i,z}  )  $  for $ t \in  \{ 1,2,\ldots,s\} $. It follows from the linearity of $ T^{-1} $ that 
 	\begin{equation*}
 		\{i \} = supp_{\pi} ( \beta e_{i,z} ) \subseteq  \bigcup\limits_{t =1}^{s} supp_{\pi} ( T^{-1} ( v_{i_t} e_{i_t,z} ) )
 	\end{equation*}
 	which implies that $  i \in  supp_{\pi} ( T^{-1}( v_{i_t} e_{i_t,z} ) ) $  for some $ t \in  \{ 1,2,\ldots,s\} $.
 	Thus, from Proposition \ref{Iso wt ej less wt ei} ensure that if $u_i$ is the $i^{th}$ label of $  ( T^{-1}( v_{i_t} e_{i_t,z} ) )$,
 	\begin{equation*}
 		w_{(P,w,\pi)}  ( u_{i} e_{i,z} ) \leq  w_{(P,w,\pi)} ( v_{i_t} e_{i_t,z }) <  w_{(P,w,\pi)} ( \beta e_{i,z} ) 
 	\end{equation*}
 	that is,  $ \tilde{w} ( u _{i}) < \tilde{w} ( \beta ) =  min\{ \tilde{w}( \alpha_{iz} )  :  \alpha_{iz} \in \mathbb{F}_{q}^{k_i}  \}$, a contradiction. Hence, there is an element 
 	$  j \in \langle  supp_{\pi} ( T( \beta e_{i,z}) ) \rangle $  such that 
 	$	w_{(P,w,\pi)} ( v_j  e_{j,z}) = w_{(P,w,\pi)} ( \beta e_{i,z} ) 
 	$. 
 	\par  By the $(P,w, \pi)$-weight  preservation of T, 
 	\begin{equation*}
 		\begin{split}
 			\tilde{w}( v_{j} ) +  \sum\limits_{i \in {I_{ v_j e_{j,z}}^{P,\pi}} \setminus{ M_{v_j e_{j,z}}^{P,\pi} } } M_w &=	w_{(P,w,\pi)} ( v_j e_{j,z} ) \\ 
 			&= 	w_{(P,w,\pi)} ( T {(  v_j e_{j,z} )} )  \\
 			&=  \sum\limits_{i \in { M_{T {( \beta e_{j,z} )} }^{P,\pi} } } \tilde{w}( v_{i} ) +  \sum\limits_{i \in {I_{ T {( \beta  e_{j,z} )} }^{P,\pi}} \setminus{ M_{T {( \beta e_{j,z} )} }^{P,\pi} } } M_w 
 		\end{split}
 	\end{equation*}
 	such an element $j$ is unique and so $ I_{ T {( \beta e_{i,z})} }^{P,\pi} $  is a prime ideal. Now, considering any zero $ \alpha_{iz} \in \mathbb{F}_{q}^{k_i}  $, since 
 	$ supp_{\pi} ( T( \beta e_{i,z}) )  = supp_{\pi} (  \beta T( e_{i,z} ) ) =  supp_{\pi} (  T( e_{i,z} ) ) = supp_{\pi} (  \alpha_{iz} T( e_{i,z} ) ) = supp_{\pi} ( T( \alpha_{iz} e_{i,z} ) ) $ the result follows.
 \end{proof}
 \begin{lemma}\label{Iso supp e_ subset e_t}
 	\sloppy{If  $T \in LIsom_{(P,w,\pi)}(\mathbb{F}_{q}^N)$ and $ i \preceq t $, then
 	$ \langle  supp_{\pi} (T(  e_{i,z})) \rangle \subseteq  \langle  supp_{\pi}  (T(  e_{t,z})) \rangle $.}
 \end{lemma}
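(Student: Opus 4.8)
The plan is to leverage the characterization of prime ideals from Lemma \ref{Isom prime ideal} together with the monotonicity of weights under $T^{-1}$ established in Proposition \ref{Iso wt ej less wt ei}. Since $T \in LIsom_{(P,w,\pi)}(\mathbb{F}_{q}^N)$, Lemma \ref{Isom prime ideal} tells us that $\langle supp_{\pi}(T(e_{i,z})) \rangle$ and $\langle supp_{\pi}(T(e_{t,z})) \rangle$ are both prime ideals, so each has a unique maximal element; call them $a$ and $b$ respectively, where $a$ is the coordinate realizing $w_{(P,w,\pi)}(v_a e_{a,z}) = w_{(P,w,\pi)}(e_{i,z})$ (the argument inside the proof of Lemma \ref{Isom prime ideal} produces exactly such an element) and similarly $b$ realizes $w_{(P,w,\pi)}(v_b' e_{b,z}) = w_{(P,w,\pi)}(e_{t,z})$. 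To prove the desired containment $\langle supp_{\pi}(T(e_{i,z})) \rangle \subseteq \langle supp_{\pi}(T(e_{t,z})) \rangle$, it suffices (since both are ideals and the left side is the prime ideal $\langle a \rangle$) to show $a \preceq b$.

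First I would use linearity of $T$ (or $T^{-1}$) to connect the supports. Since $i \preceq t$ we have $e_{i,z} \in (\mathbb{F}_q^N)_{\langle t \rangle}$, and more usefully, writing $e_{t,z}$ and $e_{i,z}$ in terms of $T$, one expands $T(e_{t,z}) = \sum_{k} v_k e_{k,z}$ over $k \in supp_{\pi}(T(e_{t,z}))$. The key point is that $i$ lies in the ideal generated by $supp_{\pi}(T^{-1}(v_b e_{b,z}))$ or, running the argument in the other direction, that $a \in supp_{\pi}(T(e_{i,z})) \subseteq supp_{\pi}(T(e_{t,z}))$ once we know the inclusion at the level of supports of the basis images. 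Concretely: from $i \preceq t$ one gets $supp_{\pi}(e_{i,z}) = \{i\} \subseteq \langle supp_{\pi}(e_{t,z}) \rangle$, hence applying $T$ and using that $T$ is a linear isometry (so it respects the order structure of generated ideals on one-dimensional inputs) should force $\langle supp_{\pi}(T(e_{i,z})) \rangle \subseteq \langle supp_{\pi}(T(e_{t,z})) \rangle$; the cleanest route is to express $e_{i,z}$ as a combination of $T^{-1}$ applied to the components $v_k e_{k,z}$ of $T(e_{t,z})$, conclude $i \in \langle supp_{\pi}(T^{-1}(v_k e_{k,z})) \rangle$ for some $k$ in $supp_{\pi}(T(e_{t,z}))$, and then apply Proposition \ref{Iso wt ej less wt ei} to control the weights.

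The main obstacle I anticipate is bridging from "$i$ is in the generated ideal of some $supp_{\pi}(T^{-1}(v_k e_{k,z}))$" to the statement about the unique maximal element $a$ of the prime ideal $\langle supp_{\pi}(T(e_{i,z})) \rangle$ actually satisfying $a \preceq k \preceq b$. This requires combining three facts carefully: (1) the weight-preservation $w_{(P,w,\pi)}(T(e_{i,z})) = w_{(P,w,\pi)}(e_{i,z})$ and similarly for $t$; (2) Proposition \ref{Iso wt ej less wt ei}, which gives $w_{(P,w,\pi)}(u_i e_{i,z}) \le w_{(P,w,\pi)}(v_k e_{k,z})$ for the $i^{th}$ label $u_i$ of $T^{-1}(v_k e_{k,z})$; and (3) the fact that since the realizing element of the prime ideal is characterized by achieving the full weight $w_{(P,w,\pi)}(e_{i,z})$, the element $a$ must in fact equal the unique maximal element, and the chain of inequalities is forced to be an equality, pinning down that $a$ sits below a maximal element of $\langle supp_{\pi}(T(e_{t,z})) \rangle$.

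Once $a \preceq b$ is established, the containment follows immediately: $\langle supp_{\pi}(T(e_{i,z})) \rangle = \langle a \rangle \subseteq \langle b \rangle = \langle supp_{\pi}(T(e_{t,z})) \rangle$, where the last equality uses primeness of $\langle supp_{\pi}(T(e_{t,z})) \rangle$. I would present the argument by first invoking Lemma \ref{Isom prime ideal} to reduce to the two generators $a, b$, then doing the linearity expansion of $e_{i,z}$ through $T^{-1}$ applied to the pieces of $T(e_{t,z})$, and finally closing with the weight inequality from Proposition \ref{Iso wt ej less wt ei} to force $a \preceq b$.
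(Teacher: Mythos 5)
Your setup agrees with the paper's: invoke Lemma \ref{Isom prime ideal} to write $\langle supp_{\pi}(T(e_{i,z}))\rangle = \langle a\rangle$ and $\langle supp_{\pi}(T(e_{t,z}))\rangle = \langle b\rangle$ as prime ideals, and reduce the lemma to showing $a \preceq_P b$. But the mechanism you propose for establishing $a \preceq_P b$ does not work, for two reasons. First, the decomposition you describe is not available: writing $T(e_{t,z}) = \sum_k v_k e_{k,z}$ and applying $T^{-1}$ gives $e_{t,z} = \sum_k T^{-1}(v_k e_{k,z})$, so linearity only yields $\{t\} = supp_{\pi}(e_{t,z}) \subseteq \bigcup_k supp_{\pi}(T^{-1}(v_k e_{k,z}))$, i.e.\ a statement about $t$. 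There is no expression of $e_{i,z}$ as a combination of $T^{-1}$ applied to the components of $T(e_{t,z})$, so the claimed membership $i \in \langle supp_{\pi}(T^{-1}(v_k e_{k,z}))\rangle$ has no justification. (That trick is exactly what the \emph{proof of Lemma \ref{Isom prime ideal}} uses, but there it is applied to $e_{i,z}$ itself and its own image components, which is why it produces information about $i$.)

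Second, and more fundamentally, even if you obtained such a membership, Proposition \ref{Iso wt ej less wt ei} only delivers numerical inequalities between $(P,w,\pi)$-weights, and the weight of $\alpha_j e_{j,z}$ essentially measures $\tilde{w}(\alpha_j)$ plus $M_w$ times the size of $\langle j\rangle\setminus\{j\}$. Equality or inequality of such numbers cannot force poset comparability: in an antichain all $e_{j,z}$ have equal weight yet no two indices are comparable. So your step (3), ``the chain of inequalities is forced to be an equality, pinning down that $a$ sits below a maximal element of $\langle supp_{\pi}(T(e_{t,z}))\rangle$,'' is precisely the missing idea rather than a consequence of what precedes it. The paper closes this gap by examining the support of $T(e_{i,z}) - T(e_{t,z}) = T(e_{i,z}-e_{t,z})$, using $w_{(P,w,\pi)}(e_{i,z}-e_{t,z}) \le w_{(P,w,\pi)}(e_{t,z})$ (which is where the hypothesis $i\preceq t$ actually enters), and splitting into cases according to whether $a$, $b$, or both survive the cancellation; the case where both survive further needs Proposition \ref{w(u)=w(1)} to compare labels. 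Your proposal never considers the image of the difference, so it has no device for converting the hypothesis $i\preceq t$ into the conclusion $a\preceq b$.
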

 \begin{proof}
 	If $i=t$, then there is nothing to prove. Let $ i \neq t $, from  Lemma \ref{Isom prime ideal}, $ \langle  supp_{\pi}  (T(  e_{i,z}))  \rangle $ and $ \langle  supp_{\pi}  (T(  e_{t,z})) \rangle $ are a prime ideals.  So there are elements $ k $ and $ j $ such that 
 $  \langle  k \rangle = \langle  supp_{\pi}  (T(  e_{i,z}))  \rangle $ and 
 $ \langle  j \rangle =  \langle  supp_{\pi}  (T(  e_{t,z})) \rangle $. If $k = j$ then we are done, so assume $k \neq j$. Thus, either 
 $ 	k \in \langle  supp_{\pi}  (T(  e_{i,z}) - T(  e_{t,z})) \rangle $ or  $
 j \in \langle  supp_{\pi}  (T(  e_{i,z}) - T(  e_{t,z})) \rangle $.
 Therefore, we have three cases to consider: \newline
 (1) If $ k \notin  supp_{\pi}  (T(  e_{i,z}) - T(  e_{t,z}))  $:  In this case, 
 $ 	k \in  supp_{\pi}  ( T(  e_{t,z}))  $ because $ k \in  supp_{\pi}  ( T(  e_{i,z}))  $. It follows that $  \langle  supp_{\pi}  (T(  e_{i,z}))  \rangle = \langle  k \rangle \subseteq \langle  supp_{\pi} (T(  e_{t,z})) \rangle  $.
 \\
 (2) If $ j \notin supp_{\pi}  (T(  e_{i,z}) - T(  e_{t,z})) $:  In this case, 
 $ 	j \in  supp_{\pi}  ( T(  e_{i,z}))  $  so  $ j <  k $.  Hence,  $  \langle  supp_{\pi}  (T(  e_{t,z}))  \rangle = \langle  j \rangle  \subsetneq \langle  k \rangle =  \langle  supp_{\pi}  (T(  e_{i,z}))  \rangle$. So,
  \begin{equation*}
  	\begin{split}
  		w_{(P,w,\pi)} ( e_{t,z} )  
  		&= 	w_{(P,w,\pi)} ( T {(  e_{t,z})} )  \\
  		&=  \tilde{w}( 1 ) +  \sum\limits_{j \in {I_{ T {(  e_{t,z})} }^{P,\pi}} \setminus{ \{ j \}} } M_w \\
  			&<  \tilde{w}( 1 ) +  \sum\limits_{j \in {I_{ T {(  e_{i,z})} }^{P,\pi}} \setminus{ \{k\}} } M_w  \\
  				&= 	w_{(P,w,\pi)} ( T {(  e_{i,z})} )
  	\end{split}
  \end{equation*}
 The second and third equality follow from Proposition \ref{w(u)=w(1)}. However, the hypothesis $ i \preceq_P t$ implies $ w_{(P,w,\pi)} ( T {(  e_{i,z})} ) \leq 	w_{(P,w,\pi)} ( e_{t,z} ) $, a contradiction.
  \\
  (3) If $ 	k, ~j \in supp_{\pi}  (T(  e_{i,z}) - T(  e_{t,z}))  $:  
  Let $x_m$  and $v_m $  be the  $ m^{th} $ labels of $T(  e_{i,z}))  $ and $T(  e_{t,z}))  $  respectively. If $u_k$  and  $u_j$ are the respectively $ k^{th} $ and  $ j^{th} $  labels of $ T(  e_{i,z}) -  T(  e_{t,z}) $, 
 \begin{equation*}
  	\begin{split}
  		w_{(P,w,\pi)} ( u_k e_{k,z} -  u_j e_{j,z}  )  
  		&\leq	w_{(P,w,\pi)} ( T {(  e_{i,z})}  - T {(  e_{t,z})} )  \\
  		&=  w_{(P,w,\pi)} ( T {(  e_{i,z}-  e_{t,z})} ) \\
  		&= 	w_{(P,w,\pi)} {(  e_{i,z} -  e_{t,z})} 
  	\end{split}
  \end{equation*}
  By hypothesis  $ i \preceq_P t$ so $ w_{(P,w,\pi)} {(  e_{i,z} -  e_{t,z})} \leq	w_{(P,w,\pi)} {( e_{t,z})}  $. And, 
  \begin{equation}\label{eqation1}
  	\begin{split}
  		w_{(P,w,\pi)} ( u_k e_{k,z} -  u_j e_{j,z}  )  
  		&\leq	w_{(P,w,\pi)} {( e_{t,z})} \\
  		&=  w_{(P,w,\pi)} ( T {( e_{t,z})} ) \\
  		&= 	w_{(P,w,\pi)} {(  v_j e_{j,z})} 
  	\end{split}
  \end{equation}
 If $x_j $  and $ v_k $ are both non-zero, then  $ j \preceq_P k$ and  $ k \preceq_P j $, a contradiction with $k \neq j$. So either $x_l $ are zero or $ v_k $ are zero. If  $x_j = 0 $ 
  then  $u_j = - v_j $, from (\ref{eqation1}) we have that $ k \preceq_P j $.  If  $ v_k = 0 $ 
  then $ u_k e_{k,z} -  u_j e_{j,z} = x_k e_{k,z} -  u_j e_{j,z}  $, and in this case, if 
  $ k \not\preceq_P j $ or $ j \prec_P k $, as $ \tilde{w}(x_k) = \tilde{w}(1) = \tilde{w}(v_j)  $ (Proposition \ref{w(u)=w(1)}), it follows $ 	w_{(P,w,\pi)} ( x_k e_{k,z} -  u_j e_{j,z}  ) > 	w_{(P,w,\pi)} (  v_j e_{j,z}  ) $, a contradiction
  with (1). Therefore  $ k \preceq_P j $.  In both cases, we have that  $ k \preceq_P j $. Hence 	$ \langle supp_{\pi}  (T(  e_{i,z})) \rangle \subseteq  \langle  supp_{\pi}  (T(  e_{t,z})) \rangle $.
 \end{proof}
\begin{proposition}\label{Isom T }
	If  $T \in LIsom_{(P,w,\pi)}(\mathbb{F}_{q}^N)$ and $0 \neq \alpha \in  \mathbb{F}_{q}^{k_i} $ then for each $i \in [n]$ there is a  $t \in [n]$, 
	\begin{equation*}
		T (\alpha_{iz} e_{i,z}) = \beta_{tz}  e_{ t,z } + u^{t}
	\end{equation*}
	where $  u^{t} \in (\mathbb{F}_{q}^N)_{{\langle t \rangle} ^{*}} $ and $ \tilde{w}(\beta_{tz} ) =  \tilde{w} ( \alpha_{iz}  )$.  In particular, if 
	$\alpha_{tz} = 1$ then $ \tilde{w}(\beta_{tz} ) =  \tilde{w} ( 1 )$ and $ \tilde{w}(\delta_{tz} \beta_{tz} ) =  \tilde{w} ( \delta_{tz})$ for all  $ \delta_{tz} \in \mathbb{F}_{q}^{k_t} $.
\end{proposition}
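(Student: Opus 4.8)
The plan is to read off the index $t$ from the prime-ideal structure supplied by Lemma~\ref{Isom prime ideal}, extract the block decomposition directly, and then force the equality $\tilde{w}(\beta_{tz})=\tilde{w}(\alpha_{iz})$ by the cardinality/squeeze argument already used in the proof of Proposition~\ref{isom remark}. First I would fix $i\in[n]$ and a nonzero $\alpha_{iz}\in\mathbb{F}_q^{k_i}$. By Lemma~\ref{Isom prime ideal} the ideal $\langle supp_{\pi}(T(\alpha_{iz}e_{i,z}))\rangle$ is prime; let $t$ be its unique maximal element. Being the unique maximal element of the ideal generated by $supp_{\pi}(T(\alpha_{iz}e_{i,z}))$, the index $t$ itself lies in $supp_{\pi}(T(\alpha_{iz}e_{i,z}))$, and every other index in that support lies strictly below $t$. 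Hence in the canonical block decomposition of $T(\alpha_{iz}e_{i,z})$ the $t$-th block, call it $\beta_{tz}$, is nonzero, while all remaining blocks are supported on $\langle t\rangle^{*}$; collecting them into $u^{t}$ gives $T(\alpha_{iz}e_{i,z})=\beta_{tz}e_{t,z}+u^{t}$ with $u^{t}\in(\mathbb{F}_q^N)_{\langle t\rangle^{*}}$, which is the asserted form, and moreover $I^{P,\pi}_{T(\alpha_{iz}e_{i,z})}=\langle t\rangle$ with $M^{P,\pi}_{T(\alpha_{iz}e_{i,z})}=\{t\}$.

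Next I would compare $(P,w,\pi)$-weights. Since $\alpha_{iz}e_{i,z}$ is supported on the prime ideal $\langle i\rangle$, whose only maximal element is $i$, one has $w_{(P,w,\pi)}(\alpha_{iz}e_{i,z})=\tilde{w}(\alpha_{iz})+(|\langle i\rangle|-1)M_w$, while the description above gives $w_{(P,w,\pi)}(T(\alpha_{iz}e_{i,z}))=\tilde{w}(\beta_{tz})+(|\langle t\rangle|-1)M_w$. Equating the two (isometry of $T$) yields $\tilde{w}(\alpha_{iz})-\tilde{w}(\beta_{tz})=(|\langle t\rangle|-|\langle i\rangle|)M_w$, an integer multiple of $M_w$. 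But $\alpha_{iz}\neq 0$ and $\beta_{tz}$ is a nonzero block, so $0<\tilde{w}(\alpha_{iz}),\tilde{w}(\beta_{tz})\le M_w$, and therefore $|\tilde{w}(\alpha_{iz})-\tilde{w}(\beta_{tz})|<M_w$; the multiple must be $0$, which forces $|\langle i\rangle|=|\langle t\rangle|$ and, above all, $\tilde{w}(\beta_{tz})=\tilde{w}(\alpha_{iz})$.

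For the last assertion I would take $\alpha_{iz}=1$, so $\tilde{w}(\beta_{tz})=\tilde{w}(1)$ by what precedes. To upgrade this to $\tilde{w}(\delta_{tz}\beta_{tz})=\tilde{w}(\delta_{tz})$ for every $\delta_{tz}\in\mathbb{F}_q^{k_t}$, use linearity of $T$: for a scalar $\delta\in\mathbb{F}_q$ we get $T(\delta e_{i,z})=\delta\,T(e_{i,z})=(\delta\beta_{tz})e_{t,z}+\delta u^{t}$, and since $\delta\neq 0$ this again has prime ideal $\langle t\rangle$ with $t$-th block $\delta\beta_{tz}$, so the first part of the proposition applied with $\alpha_{iz}=\delta$ gives $\tilde{w}(\delta\beta_{tz})=\tilde{w}(\delta)=w(\delta)$ for all $\delta\in\mathbb{F}_q$. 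Because $\tilde{w}^{k_t}$ is the coordinatewise maximum of $w$, applying the scalar identity in each coordinate passes immediately from scalars to arbitrary $\delta_{tz}\in\mathbb{F}_q^{k_t}$.

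The step I expect to be the crux is the squeeze in the second paragraph: one genuinely needs $\tilde{w}$ to be strictly positive and bounded by $M_w$ on nonzero blocks so that the integer multiple of $M_w$ is pinned to $0$; equivalently, that the principal ideals $\langle i\rangle$ and $\langle t\rangle$ must have the same cardinality, which is the only place where isometry (rather than mere support preservation) is exploited here. The block decomposition in the first paragraph and the scalar-to-vector passage in the last paragraph are routine once Lemma~\ref{Isom prime ideal} is available.
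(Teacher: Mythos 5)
Your argument for the main claim is essentially the paper's own proof: Lemma~\ref{Isom prime ideal} supplies the unique maximal element $t$, the decomposition $T(\alpha_{iz}e_{i,z})=\beta_{tz}e_{t,z}+u^{t}$ with $u^{t}\in(\mathbb{F}_{q}^{N})_{\langle t\rangle^{*}}$ follows, and the squeeze you carry out explicitly (difference of two block weights in $(0,M_w]$ equal to an integer multiple of $M_w$) is exactly Proposition~\ref{isom remark}, which the paper simply invokes by reference. You actually do more than the paper on the ``in particular'' clause, whose proof the paper omits entirely; your reduction via linearity to $\tilde{w}(\delta\beta_{tz})=\tilde{w}(\delta)$ for scalars $\delta$ is correct, but the closing step ``apply the scalar identity in each coordinate'' only yields the inequality $\tilde{w}(\delta_{tz}\beta_{tz})\leq\tilde{w}(\delta_{tz})$ under a coordinatewise reading, so the cleaner route is to apply the first part of the proposition directly to an arbitrary nonzero $\alpha_{iz}\in\mathbb{F}_{q}^{k_i}$ rather than only to scalar multiples of $e_{i,z}$.
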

\begin{proof}
	\sloppy{There exist a unique $ t \in [n] $ from Lemma \ref{Isom prime ideal} such that $ \langle t  \rangle = \langle  supp_{\pi}  (T( e_{i,z})) \rangle = \langle supp_{\pi} (T( \alpha_{iz} e_{i,z})) \rangle $  and so  $ T( \alpha_{iz} e_{i,z}) \in (\mathbb{F}_{q}^N)_{{\langle t \rangle} }$. So that we get $  T (\alpha_{iz} e_{i,z}) = \beta_{tz} e_{ t,z } + u^{t} $  for some $\beta_{tz} \in \mathbb{F}_{q}^{k_t} $ and  $  u^{t} \in (\mathbb{F}_{q}^N)_{{\langle t \rangle} ^{*}} $. 
	Since $	w_{(P,w,\pi)} (T (\alpha_{iz} e_{i,z})) = 	w_{(P,w,\pi)} ( \beta_{tz} e_{t,z} )$ and $ T $ preserves weights, we have that  $	w_{(P,w,\pi)} (\alpha_{iz} e_{i,z}) = w_{(P,w,\pi)} ( \beta_{tz} e_{t,z})$. From Proposition \ref{isom remark}, we conclude that $\tilde{w}( \beta_{tz}  ) =  \tilde{w} ( \alpha_{iz}) $.}
\end{proof}
\begin{proposition}\label{T(Vi) is subset of V{ideal j}}
	If  $T \in LIsom_{(P,w,\pi)}(\mathbb{F}_{q}^N)$ for each $i \in [n]$ there is a unique $t \in [n]$, such that  $	w_{(P,w,\pi)} (T ( e_{i,z})) = 	w_{(P,w,\pi)} ( e_{t,z} )$ and 
$T(\mathbb{F}_{q}^N)_{\langle {i} \rangle} \subseteq (\mathbb{F}_{q}^N)_{{\langle {j} \rangle}} $.
\end{proposition}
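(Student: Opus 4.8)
The plan is to read the element $t$ off the prime-ideal structure of $\langle supp_{\pi}(T(e_{i,z}))\rangle$ provided by Lemma \ref{Isom prime ideal}, to check the weight identity by a direct computation of $(P,w,\pi)$-weights, to obtain the inclusion from Lemma \ref{Iso supp e_ subset e_t}, and finally to deduce uniqueness from these two ingredients. (Here the $j$ appearing in the displayed inclusion is the same element as the $t$ whose existence is asserted.)

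First I would fix $i\in[n]$ and a coordinate $z$, and set $t$ to be the unique maximal element of the prime ideal $\langle supp_{\pi}(T(e_{i,z}))\rangle$ (Lemma \ref{Isom prime ideal}). Proposition \ref{Isom T } with $\alpha_{iz}=1$ then gives $T(e_{i,z})=\beta_{tz}e_{t,z}+u^{t}$ with $u^{t}\in(\mathbb{F}_{q}^N)_{\langle t\rangle^{*}}$ and $\tilde{w}(\beta_{tz})=\tilde{w}(1)$; in particular $supp_{\pi}(T(e_{i,z}))\subseteq\langle t\rangle$ and $t\in supp_{\pi}(T(e_{i,z}))$, so that $I_{T(e_{i,z})}^{P,\pi}=\langle t\rangle$ with $M_{T(e_{i,z})}^{P,\pi}=\{t\}$. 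Computing both sides,
\[
w_{(P,w,\pi)}(T(e_{i,z}))=\tilde{w}(\beta_{tz})+\!\!\sum_{k\in\langle t\rangle\setminus\{t\}}\!\!M_w=\tilde{w}(1)+(|\langle t\rangle|-1)M_w=w_{(P,w,\pi)}(e_{t,z}),
\]
since $supp_{\pi}(e_{t,z})=\{t\}$; this is the claimed weight equality.

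For the inclusion, I would use that $(\mathbb{F}_{q}^N)_{\langle i\rangle}$ is spanned by $\{e_{s,z'}:s\preceq_P i,\ 1\le z'\le k_s\}$. Lemma \ref{Iso supp e_ subset e_t} applied to each $s\preceq_P i$ yields $\langle supp_{\pi}(T(e_{s,z'}))\rangle\subseteq\langle supp_{\pi}(T(e_{i,z'}))\rangle=\langle t\rangle$, the last equality because the element $t$ attached in the previous paragraph does not depend on the coordinate $z$ (all $e_{i,z}$ lie in the same block $i$). Hence $T(e_{s,z'})\in(\mathbb{F}_{q}^N)_{\langle t\rangle}$ for every generator of $(\mathbb{F}_{q}^N)_{\langle i\rangle}$, and since $(\mathbb{F}_{q}^N)_{\langle t\rangle}$ is a subspace and $T$ is linear, $T\big((\mathbb{F}_{q}^N)_{\langle i\rangle}\big)\subseteq(\mathbb{F}_{q}^N)_{\langle t\rangle}$. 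For uniqueness, if some $t'$ also satisfied both conditions, then $T(e_{i,z})\in T\big((\mathbb{F}_{q}^N)_{\langle i\rangle}\big)\subseteq(\mathbb{F}_{q}^N)_{\langle t'\rangle}$ forces $\langle t\rangle=\langle supp_{\pi}(T(e_{i,z}))\rangle\subseteq\langle t'\rangle$, while the weight equality applied to $t$ and to $t'$ forces $|\langle t\rangle|=|\langle t'\rangle|$; together these give $\langle t\rangle=\langle t'\rangle$ and thus $t=t'$.

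I do not expect a serious obstacle here: the statement is essentially a packaging of Lemmas \ref{Isom prime ideal} and \ref{Iso supp e_ subset e_t}. The only delicate point is the bookkeeping — verifying that the index $t$ assigned to block $i$ is independent of the coordinate $z$, and that $w_{(P,w,\pi)}(e_{\cdot,z})$ depends only on the size of the generated principal ideal — so that the whole subspace $(\mathbb{F}_{q}^N)_{\langle i\rangle}$ is carried into a single $(\mathbb{F}_{q}^N)_{\langle t\rangle}$ and the uniqueness statement is unambiguous.
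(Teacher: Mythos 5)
Your proposal is correct and takes essentially the same route as the paper: the paper's own proof of this proposition is literally the one-line citation ``follows from Lemma \ref{Isom prime ideal} and Proposition \ref{Isom T }'', and you assemble exactly those ingredients (together with Lemma \ref{Iso supp e_ subset e_t} for the inclusion, which the paper leaves implicit) in the natural way. The one claim you assert rather than prove is that $t$ does not depend on the coordinate $z$ within block $i$; this does need a word --- for instance, two distinct candidates $t_z, t_{z'}$ would be incomparable because the weight equality forces $|\langle t_z\rangle|=|\langle t_{z'}\rangle|$, so $\langle supp_{\pi}(T(e_{i,z}+e_{i,z'}))\rangle$ would have two maximal elements, contradicting Lemma \ref{Isom prime ideal} applied to the nonzero vector $e_{i,z}+e_{i,z'}$ supported on block $i$ --- but that is a one-line fix, not a structural flaw.
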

\begin{proof}
	The proof follows from the Lemma \ref{Isom prime ideal} and Proposition  \ref{Isom T }. 	
\end{proof}
\begin{theorem}\label{phiT preserves label}
	Let $T : \mathbb{F}_{q}^N \rightarrow \mathbb{F}_{q}^N $ be an automorphism of  $(\mathbb{F}_{q}^N, ~d_{P,w,\pi})$, let $i \in P$ and let $j $ be the unique element of $P$ determined by $T(\mathbb{F}_{q}^N)_{i} \subseteq (\mathbb{F}_{q}^N)_{{\langle {j} \rangle}} $ and $	w_{(P,w,\pi)} (T (\alpha_{iz} e_{i,z})) = 	w_{(P,w,\pi)} ( \beta_{jz} e_{j,z} )$. Then $ dim((\mathbb{F}_{q}^N)_{i}) = dim (  (\mathbb{F}_{q}^N)_{ j })$.
\end{theorem}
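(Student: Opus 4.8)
The plan is to reduce the assertion to the numerical equality $k_i=k_j$: since $(\mathbb{F}_{q}^N)_{i}=\{v:supp_{\pi}(v)\subseteq\{i\}\}$ is exactly the $i$-th block $\mathbb{F}_{q}^{k_i}$, we have $\dim(\mathbb{F}_{q}^N)_{i}=k_i$ and likewise $\dim(\mathbb{F}_{q}^N)_{j}=k_j$, and I would obtain $k_i=k_j$ as two inequalities, one using $T$ and the symmetric one using $T^{-1}$. As a preliminary I would first record that $|\langle i\rangle|=|\langle j\rangle|=:m$. Choosing $z_0$ with $\langle supp_{\pi}(T(e_{i,z_0}))\rangle=\langle j\rangle$ (possible since $j\in supp_{\pi}(T((\mathbb{F}_{q}^N)_{i}))$, hence $j\in supp_{\pi}(T(e_{i,z_0}))$ for some $z_0$), weight preservation gives $w_{(P,w,\pi)}(e_{i,z_0})=w_{(P,w,\pi)}(T(e_{i,z_0}))$; the left side equals $\tilde{w}(1)+(|\langle i\rangle|-1)M_w$ because $\langle i\rangle$ is principal with unique maximal element $i$, and the right side equals $\tilde{w}(1)+(|\langle j\rangle|-1)M_w$ by Lemma~\ref{Isom prime ideal} (so that $I_{T(e_{i,z_0})}^{P,\pi}=\langle j\rangle$ is prime) together with Proposition~\ref{w(u)=w(1)} (so that the $j$-th block of $T(e_{i,z_0})$ has $\tilde{w}$-weight $\tilde{w}(1)$). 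Comparing yields $|\langle i\rangle|=|\langle j\rangle|$.

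The heart of the argument is the injectivity of the linear map $\Phi:(\mathbb{F}_{q}^N)_{i}\to\mathbb{F}_{q}^{k_j}$ sending $v$ to the $j$-th block of $T(v)$; it is well defined because $T((\mathbb{F}_{q}^N)_{i})\subseteq(\mathbb{F}_{q}^N)_{\langle j\rangle}$ by Proposition~\ref{T(Vi) is subset of V{ideal j}}. If $v\neq0$ and $\Phi(v)=0$, then $supp_{\pi}(T(v))\subseteq\langle j\rangle\setminus\{j\}$, so $I_{T(v)}^{P,\pi}$ is an ideal contained in $\langle j\rangle\setminus\{j\}$ and therefore has at most $m-1$ elements, whence $w_{(P,w,\pi)}(T(v))\leq(m-1)M_w$. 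On the other hand $v$ has $I_{v}^{P,\pi}=\langle i\rangle$, $M_{v}^{P,\pi}=\{i\}$ and nonzero $i$-block $v_i$, so $w_{(P,w,\pi)}(v)=\tilde{w}^{k_i}(v_i)+(m-1)M_w>(m-1)M_w$, contradicting $w_{(P,w,\pi)}(T(v))=w_{(P,w,\pi)}(v)$. Hence $\Phi$ is injective and $k_i\leq k_j$.

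For the reverse inequality I would run exactly the same scheme for the isometry $T^{-1}$, but this requires first identifying the $P$-element attached to $j$ by $T^{-1}$. Let $i'$ be the unique element with $T^{-1}((\mathbb{F}_{q}^N)_{j})\subseteq(\mathbb{F}_{q}^N)_{\langle i'\rangle}$ (Proposition~\ref{T(Vi) is subset of V{ideal j}}); by Lemma~\ref{Iso supp e_ subset e_t} applied to $T^{-1}$ this strengthens to $T^{-1}((\mathbb{F}_{q}^N)_{\langle j\rangle})\subseteq(\mathbb{F}_{q}^N)_{\langle i'\rangle}$. Applying $T^{-1}$ to $T((\mathbb{F}_{q}^N)_{i})\subseteq(\mathbb{F}_{q}^N)_{\langle j\rangle}$ gives $(\mathbb{F}_{q}^N)_{i}\subseteq(\mathbb{F}_{q}^N)_{\langle i'\rangle}$, i.e.\ $i\preceq i'$, while the preliminary computation, now for $T^{-1}$ at $j$, yields $|\langle i'\rangle|=|\langle j\rangle|=m=|\langle i\rangle|$. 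Since $i\preceq i'$ forces $\langle i\rangle\subseteq\langle i'\rangle$, the equality of cardinalities gives $\langle i\rangle=\langle i'\rangle$ and hence $i'=i$. Thus $T^{-1}((\mathbb{F}_{q}^N)_{j})\subseteq(\mathbb{F}_{q}^N)_{\langle i\rangle}$, and the injectivity argument above applied to $T^{-1}$ (with the roles of $i$ and $j$ interchanged) produces a linear injection $(\mathbb{F}_{q}^N)_{j}\to\mathbb{F}_{q}^{k_i}$, so $k_j\leq k_i$. Together with $k_i\leq k_j$ this gives $k_i=k_j$, i.e.\ $\dim(\mathbb{F}_{q}^N)_{i}=\dim(\mathbb{F}_{q}^N)_{j}$.

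The step I expect to be the main obstacle is the identification $i'=i$: without it one recovers only one of the two inequalities. It rests on playing off the order relation $i\preceq i'$ (which comes from the fact that $T$ and $T^{-1}$ carry ideal-subspaces into ideal-subspaces, Lemma~\ref{Iso supp e_ subset e_t}) against the equality of principal-ideal sizes. Everything else is a direct unwinding of the definition of $w_{(P,w,\pi)}$ together with weight preservation and the structure results already established, namely Lemma~\ref{Isom prime ideal}, Lemma~\ref{Iso supp e_ subset e_t}, and Propositions~\ref{w(u)=w(1)} and \ref{T(Vi) is subset of V{ideal j}}.
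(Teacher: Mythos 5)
Your argument is correct, but there is nothing in the paper to compare it against: Theorem~\ref{phiT preserves label} is stated without proof (the source passes directly from its statement to Theorem~\ref{structureTheorem}), so your write-up actually fills a gap rather than duplicating an existing argument. Your reduction to $k_i=k_j$ via two injectivity arguments is sound. The preliminary step $|\langle i\rangle|=|\langle j\rangle|$ follows as you say from weight preservation together with Lemma~\ref{Isom prime ideal} and Proposition~\ref{w(u)=w(1)}, and it is genuinely needed: without it the inequality $w_{(P,w,\pi)}(T(v))\leq(|\langle j\rangle|-1)M_w<\tilde{w}^{k_i}(v_i)+(|\langle i\rangle|-1)M_w$ in your injectivity step would not close. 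The key observation that $\langle j\rangle\setminus\{j\}$ is again an ideal (since $j$ is the unique maximal element of the prime ideal $\langle j\rangle$) is what makes the bound $w_{(P,w,\pi)}(T(v))\leq(m-1)M_w$ legitimate, and the positivity $\tilde{w}^{k_i}(v_i)>0$ for $v\neq 0$ gives the contradiction. Your identification $i'=i$ for the reverse direction --- playing $i\preceq i'$ (from applying $T^{-1}$ to the inclusion $T((\mathbb{F}_{q}^N)_{i})\subseteq(\mathbb{F}_{q}^N)_{\langle j\rangle}$) against the equality of the cardinalities of the principal ideals --- is exactly the point that a careless symmetric argument would miss, and you handle it correctly. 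The one place where you lean on the theorem's hypotheses rather than proving something is the existence of $z_0$ with $j\in supp_{\pi}(T(e_{i,z_0}))$; this is fine given how $j$ is defined in the statement, but it is worth a sentence noting that if $j$ appeared in no $supp_{\pi}(T(e_{i,z}))$ then $\langle j\rangle$ would not be the minimal ideal containing $T((\mathbb{F}_{q}^N)_{i})$, contradicting uniqueness of $j$.
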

\begin{theorem}\label{structureTheorem}
		If  $T \in LIsom_{(P,w,\pi)}(\mathbb{F}_{q}^N)$ and $ \alpha_{iz} \in  \mathbb{F}_{q}^{k_i} $ such that $ \tilde{w} ( \alpha_{iz}) = M_w $. Consider the map $ \phi_{T} : [n] \rightarrow [n] $  given by
	\begin{equation*}
	  \phi_{T} (i) = Max \langle  supp_{\pi}  (T( \alpha_{iz} e_{i,z})) \rangle 
	\end{equation*}
  Then: 
  \begin{enumerate}[label*=(\roman*)]
 \item  $ \phi_{T} $ is an automorphism of the labelled  poset $ (P, \pi)$.
 \item The map  $ \Phi_{T} : LIsom_{(P,w,\pi)}(\mathbb{F}_{q}^N) \rightarrow \mathcal{AUT}(P,\pi)  $ given by  $ {T}  \rightarrow \phi_{T}  $ is a surjective group homomorphism from  $ LIsom_{(P,w,\pi)}(\mathbb{F}_{q}^N)$ onto $\mathcal{AUT}(P,\pi) $ with kernel equal to $\mathcal{T}$.  In particular,  $\mathcal{T}$ is a normal subgroup of $LIsom_{(P,w,\pi)}(\mathbb{F}_{q}^N)$.
 \item The map  $ \Gamma : \mathcal{AUT}(P,\pi) \rightarrow  LIsom_{(P,w,\pi)}(\mathbb{F}_{q}^N)$ given by 
 $  \Gamma_{ \psi} = T_{ \psi }$  satisfies $ { \Phi} \circ \Gamma( { \psi } ) = { \psi } $ for all  $ \psi \in \mathcal{AUT}(P,\pi) $.  
 \end{enumerate} 
\end{theorem}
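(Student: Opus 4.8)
The proof will assemble the results of Section~2 and add one new ingredient: multiplicativity of $T\mapsto\phi_T$.

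\textbf{Part (i).} I would first check that $\phi_T$ is well defined: by Lemma~\ref{Isom prime ideal} the ideal $\langle supp_{\pi}(T(\alpha_{iz}e_{i,z}))\rangle$ is prime, hence has a unique maximal element, and it does not depend on the maximal-weight vector $\alpha_{iz}$ or on $z$, because Propositions~\ref{Isom T } and~\ref{T(Vi) is subset of V{ideal j}} attach to each $i$ a single $t\in[n]$ with $T\big((\mathbb{F}_q^N)_{\langle i\rangle}\big)\subseteq(\mathbb{F}_q^N)_{\langle t\rangle}$ and $w_{(P,w,\pi)}(T(e_{i,z}))=w_{(P,w,\pi)}(e_{t,z})$; comparing this with $w_{(P,w,\pi)}(T(e_{i,z}))=w_{(P,w,\pi)}(e_{i,z})$ and using Proposition~\ref{w(u)=w(1)} to replace the relevant block weight by $\tilde w(1)$, one reads off $|\langle t\rangle|=|\langle i\rangle|$, which together with $\langle\phi_T(i)\rangle\subseteq\langle t\rangle$ forces $t=\phi_T(i)$. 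Monotonicity is Lemma~\ref{Iso supp e_ subset e_t}: $i\preceq_P t$ gives $\langle\phi_T(i)\rangle\subseteq\langle\phi_T(t)\rangle$. For bijectivity I would run the same construction on $T^{-1}$, obtaining a monotone $\phi_{T^{-1}}$; from $T\big((\mathbb{F}_q^N)_{\langle i\rangle}\big)\subseteq(\mathbb{F}_q^N)_{\langle\phi_T(i)\rangle}$ and $T^{-1}\big((\mathbb{F}_q^N)_{\langle\phi_T(i)\rangle}\big)\subseteq(\mathbb{F}_q^N)_{\langle\phi_{T^{-1}}(\phi_T(i))\rangle}$ I get $\langle i\rangle\subseteq\langle\phi_{T^{-1}}(\phi_T(i))\rangle$, and since $|\langle i\rangle|=|\langle\phi_T(i)\rangle|=|\langle\phi_{T^{-1}}(\phi_T(i))\rangle|$ (the cardinality identity above applied twice) equality holds, so $\phi_{T^{-1}}\circ\phi_T=\mathrm{id}$, and symmetrically $\phi_T\circ\phi_{T^{-1}}=\mathrm{id}$. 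Finally $k_{\phi_T(i)}=k_i$ is Theorem~\ref{phiT preserves label} (reading $(\mathbb{F}_q^N)_i=\mathbb{F}_q^{k_i}$), so $\phi_T\in\mathcal{AUT}(P,\pi)$.

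\textbf{Parts (ii) and (iii).} For the homomorphism property I would take $S,T\in LIsom_{(P,w,\pi)}(\mathbb{F}_q^N)$ and a maximal-weight $\alpha_{iz}$; by Proposition~\ref{Isom T }, $T(\alpha_{iz}e_{i,z})=\beta\,e_{\phi_T(i),z}+u$ with $u\in(\mathbb{F}_q^N)_{\langle\phi_T(i)\rangle^{*}}$ and $\tilde w(\beta)=M_w$. Applying $S$ and invoking Proposition~\ref{Isom T } again at the label $\phi_T(i)$, together with $S\big((\mathbb{F}_q^N)_{\langle\phi_T(i)\rangle^{*}}\big)\subseteq(\mathbb{F}_q^N)_{\langle\phi_S(\phi_T(i))\rangle^{*}}$ — which holds because $\phi_S$ is an order automorphism by part (i), hence sends strict inequalities to strict inequalities — I find that the $\phi_S(\phi_T(i))$-block of $ST(\alpha_{iz}e_{i,z})$ is nonzero while its $\pi$-support lies entirely in $\langle\phi_S(\phi_T(i))\rangle$, so $\phi_{ST}(i)=\phi_S(\phi_T(i))$ and $\Phi$ is a group homomorphism. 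For surjectivity and part (iii) I would use the homomorphism $\Gamma$ of Section~2, which sends $\psi\in\mathcal{AUT}(P,\pi)$ to $T_\psi\in LIsom_{(P,w,\pi)}(\mathbb{F}_q^N)$ (the Proposition of Section~2 showing $T_\psi$ is an isometry); since $T_\psi(e_{i,z})=e_{\psi(i),z}$ we have $\langle supp_{\pi}(T_\psi(\alpha_{iz}e_{i,z}))\rangle=\langle\psi(i)\rangle$, hence $\phi_{T_\psi}=\psi$, i.e. $\Phi\circ\Gamma=\mathrm{id}_{\mathcal{AUT}(P,\pi)}$; this is exactly part (iii) and in particular makes $\Phi$ surjective.

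\textbf{Kernel.} If $T\in\mathcal{T}$ then $T(e_{i,z})=\gamma_{iz}e_{i,z}+v^i$ with $v^i\in(\mathbb{F}_q^N)_{\langle i\rangle^{*}}$, so $\langle supp_{\pi}(T(\alpha_{iz}e_{i,z}))\rangle=\langle i\rangle$ and $\phi_T(i)=i$, whence $\mathcal{T}\subseteq\ker\Phi$. Conversely, if $\phi_T=\mathrm{id}$ then the element attached to each $i$ in Proposition~\ref{Isom T } is $i$ itself, so $T(e_{i,z})=\gamma_{iz}e_{i,z}+v^i$ with $v^i\in(\mathbb{F}_q^N)_{\langle i\rangle^{*}}$, $\tilde w(\gamma_{iz})=\tilde w(1)$, and $\tilde w(\delta\gamma_{iz})=\tilde w(\delta)$ for all $\delta\in\mathbb{F}_q^{k_i}$ (the ``in particular'' clause of Proposition~\ref{Isom T }), which is precisely the hypothesis of Proposition~\ref{T collection}; hence $T\in\mathcal{T}$. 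Therefore $\ker\Phi=\mathcal{T}$, and in particular $\mathcal{T}\trianglelefteq LIsom_{(P,w,\pi)}(\mathbb{F}_q^N)$.

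\textbf{Main obstacle.} Essentially everything above is bookkeeping with (prime) ideals and the weight identities of Section~2; the one genuinely new point is the multiplicativity $\phi_{ST}=\phi_S\phi_T$, with the independence of $\phi_T(i)$ from the auxiliary choices a lesser companion. Both come down to the same issue — controlling the ``lower-order'' error terms, i.e.\ showing an isometry maps $(\mathbb{F}_q^N)_{\langle j\rangle^{*}}$ into $(\mathbb{F}_q^N)_{\langle\phi_S(j)\rangle^{*}}$ — and this step genuinely uses that $\phi_S$ is \emph{injective}. So the delicate part is the logical ordering: I must first obtain well-definedness and monotonicity of $\phi_T$ (from Lemmas~\ref{Isom prime ideal} and~\ref{Iso supp e_ subset e_t} alone), then bijectivity via $\phi_{T^{-1}}$ and the cardinality count (which does not use multiplicativity), and only afterwards the homomorphism statement; otherwise the reasoning is circular.
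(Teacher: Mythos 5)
Your proposal is correct and follows the same skeleton as the paper's proof: well-definedness of $\phi_T$ from Lemma~\ref{Isom prime ideal}, monotonicity from Lemma~\ref{Iso supp e_ subset e_t}, label preservation from Theorem~\ref{phiT preserves label}, the homomorphism property by composing the block decompositions of Proposition~\ref{Isom T }, surjectivity together with (iii) via $\Gamma$, and $\ker\Phi=\mathcal{T}$. Two sub-steps are genuinely different. For bijectivity the paper proves injectivity directly: assuming $\phi_T(i)=\phi_T(t)=j$ it evaluates $w_{(P,w,\pi)}\big(T(\alpha_{iz}e_{i,z})+T(\alpha_{iz}e_{t,z})\big)$, bounds it above by $w_{(P,w,\pi)}(\alpha_{iz}e_{k,z})$ for $k=i,t$, and concludes $i\preceq_P t$ and $t\preceq_P i$; you instead exhibit the two-sided inverse $\phi_{T^{-1}}$ and close the argument with the cardinality identity $|\langle i\rangle|=|\langle\phi_T(i)\rangle|$ extracted from weight preservation and Proposition~\ref{w(u)=w(1)}. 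Your route is cleaner (it avoids the delicate weight computation on the sum and the appeal to maximal-weight scalars there), at the cost of having to run the whole construction for $T^{-1}$ as well; both are valid since $P$ is finite. Second, to push the lower-order term $S(u^{t})$ into $(\mathbb{F}_{q}^N)_{\langle\phi_S(\phi_T(i))\rangle^{*}}$ the paper uses the weight inequality $w_{(P,w,\pi)}(S(u^{t}))<w_{(P,w,\pi)}(e_{k,j})$ combined with $S\big((\mathbb{F}_{q}^N)_{\langle t\rangle}\big)\subseteq(\mathbb{F}_{q}^N)_{\langle k\rangle}$, whereas you use the order-theoretic fact that the injective $\phi_S$ sends $j\prec_P t$ to $\phi_S(j)\prec_P\phi_S(t)$; your version makes explicit that bijectivity of $\phi_S$ must be in hand before multiplicativity, which you correctly identify as the reason for the logical ordering (the paper's weight-based variant sidesteps this dependence). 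Finally, your kernel argument is more complete than the paper's: the paper writes out only $\mathcal{T}\subseteq\ker\Phi$ and then asserts equality, while your reverse inclusion --- feeding the ``in particular'' clause of Proposition~\ref{Isom T } into the hypotheses of Proposition~\ref{T collection} --- supplies exactly the half that the paper leaves implicit.
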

\begin{proof}
	The map  $ \phi_{T} $ is well-defined by Lemma \ref{Isom prime ideal}.
Furthermore, Lemma \ref{Iso supp e_ subset e_t} ensures that $ \phi_{T} $ is an order-preserving	map.	We claim that $ \phi_{T} $ is one-to-one. In fact, let us suppose that
$ j = \phi_{T}(i) = \phi_{T}(t) $. Since $  \phi_{T} (i) = Max \langle  supp_{\pi}  (T( \alpha_{iz} e_{i,z})) \rangle $  and $  \phi_{T} (t) = Max \langle  supp_{\pi}  (T( \alpha_{iz} e_{t,z})) \rangle $, it follows that, $ \langle  supp_{\pi}  (T( \alpha_{iz} e_{i,z})) \rangle =  \langle  j \rangle =  \langle  supp_{\pi}  (T( \alpha_{iz} e_{t,z})) \rangle$. 
\par By the $(P,w,\pi)$-weight preservation and the linearity of $ T $,
$	w_{(P,w,\pi)} (\alpha_{iz} e_{i,z} + \alpha_{iz} e_{t,z} ) = w_{(P,w,\pi)} ( T ( \alpha_{iz} e_{i,z} + \alpha_{iz} e_{t,z}) ) =  w_{(P,w,\pi)} ( T ( \alpha_{iz} e_{i,z}) + T (\alpha_{iz} e_{t,z}) ) $.
\par  Furthermore, $ \langle  supp_{\pi}  (T( \alpha_{iz} e_{i,z}) + T( \alpha_{iz} e_{t,z})) \rangle = \langle  supp_{\pi}  (T( \alpha_{iz} e_{k,z} )) \rangle$, $ k= i, t$. Hence,
\begin{equation*}
	\langle  supp_{\pi}  (T( \alpha_{iz} e_{i,z}) + T( \alpha_{iz} e_{t,z})) \rangle \subseteq \bigcup\limits_{k=i,t} \langle  supp_{\pi}  (T( \alpha_{iz} e_{k,z} )) \rangle
\end{equation*}
and both ideals on the right-hand side are assumed to be equal.
If $ u_j^i$ and $u_j^t$ are the labels of $ T( \alpha_{iz} e_{i,z}) $ and  $ T( \alpha_{iz} e_{t,z}) $ respectively, and $\beta = u_j^i  + u_j^t $ then, 
\begin{equation*}
	\langle  supp_{\pi}  (T( \alpha_{iz} e_{i,z}) + T( \alpha_{iz} e_{t,z})) \rangle =  \langle  supp_{\pi}  (T( \alpha_{iz} e_{k,z} )) \rangle; ~{k=i,t}
\end{equation*}
and since $ \tilde{w}( u_j^i) = \tilde{w}( u_j^t) = \tilde{w}(\alpha_{iz})  = M_w$  (see Proposition \ref{isom remark}),
\begin{equation*}
	\begin{split}
		w_{(P,w,\pi)} (T( \alpha_{iz} e_{i,z}) + T( \alpha_{iz} e_{t,z})) 
		& =  w_{(P,w,\pi)} ( \beta e_{j,z} ) \\ 
		& \leq w_{(P,w,\pi)} ( \alpha_{iz} e_{j,z} ) \\
		& = w_{(P,w,\pi)} (T( \alpha_{iz} e_{k,z} )) ; ~{k=i,t}
	\end{split}
\end{equation*}
which implies $  w_{(P,w,\pi)} ( \alpha_{iz} e_{i,j} + \alpha_{iz} e_{t,j} ) \leq 
w_{(P,w,\pi)} ( \alpha_{iz} e_{k,j} ) ; ~{k=i,j} $. Hence  $ i \preceq_P t$ and  $ t \preceq_P i $ and so $ i = t $.
Therefore,  $ \phi_{T} $ is one-to-one. Since $ P$ is finite, it follows that  $ \phi_{T} $ is a bijection preserving order, that is, an order automorphism. Theorem \ref{phiT preserves label} shows that $\phi_T$ lies in $\mathcal{AUT}(P,\pi) $, and this takes care of the first part. 
\par  (2) - (3) Consider now $T , ~ S \in LIsom_{(P,w,\pi)}(\mathbb{F}_{q}^N)$ and  $ i \in P$. We write  $ \phi_{T}(i) = t $ and  $ \phi_{S}(t) = k  $. This means that
$  T (e_{i,j}) = \alpha_{tz} e_{ t,j } + u^{t} $ with $ \tilde{w} ( \alpha_{tz} ) = 1$ and 
$  u^{t} \in (\mathbb{F}_{q}^N)_{{\langle t \rangle} ^{*}} $  and 
$ S ( e_{t,j}) = \beta_{tz} e_{ k,j } + u^{k} $ where $ \beta_{tz}$  and $ u^{k} $ satisfy analogous conditions. Now, 
\begin{equation*}
	ST( e_{i,j} ) = S(  \alpha_{tz} e_{ t,j } + u^{t} ) = \alpha_{tz}  \beta_{tz} e_{ k,j } +  
	\alpha_{tz}  u^{k} + S(  u^{t} ) 
\end{equation*}
and, since $  w_{(P,w,\pi)} (  u^{t} ) <  
w_{(P,w,\pi)} ( \alpha_{iz} e_{t,j} ) =  w_{(P,w,\pi)} ( \alpha_{iz} e_{k,j} ) $, it follows that,
$  w_{(P,w,\pi)} (S ( u^{t}) ) < w_{(P,w,\pi)} ( e_{k,j} ) $. Since 
$ S(  (\mathbb{F}_{q}^N)_{{\langle t \rangle} })  \subseteq  (\mathbb{F}_{q}^N)_{{\langle k \rangle}}$ and  $  w_{(P,w,\pi)} (S ( u^{t}) ) < w_{(P,w,\pi)} ( e_{k,j} ) $,  it follows that  $ S(u^{t}  )  \subseteq  (\mathbb{F}_{q}^N)_{{\langle k \rangle} ^{*}  }$ and $ 	ST( e_{i,j} ) =  \alpha_{tz} \beta_{tz} e_{ k,j } +  v^k $ with  $ v^k =  \alpha_{tz}  u^{k} + S(  u^{t} ) \in (\mathbb{F}_{q}^N)_{{\langle k \rangle} ^{*} } $. Hence 
$ \phi_{ST} ( i ) =  \phi_{S} \phi_{T} ( i )$. $\Phi$ is a group homomorphism. Given  $\phi \in Aut(P) $, $\Phi (T_{\phi }) = \phi $. This proves that $\Phi$ is surjective and that $ { \Phi} \circ \Gamma( { \phi } ) = { \phi } $ for all  $ \phi \in  Aut(P) $.  
\par Finally, $\mathcal{T} \subseteq ker (\Phi)$ because by the definition of $ T(  (\mathbb{F}_{q}^N)_{ \{i \}})  \subseteq  ( \mathbb{F}_{q}^N)_{{\langle i \rangle}}$ for all $i$. This means that, if $v = \alpha_{iz}  e_{i,j}$ then $T(v) = v' + u' $ with $ v' = \beta_{iz}  e_{i,j} \in (\mathbb{F}_{q}^N)_{ \{i \}} $, $ \tilde{w}(\beta_{iz}) =  \tilde{w}(\alpha_{iz}) $ and $ u' \in  \mathbb{F}_{q}^N)_{{\langle i \rangle}^{*}} $. Hence $\mathcal{T} = ker (\Phi)$. This shows also that $\mathcal{T}$ is a normal subgroup of  $LIsom_{(P,w,\pi)}(\mathbb{F}_{q}^N)$.
\end{proof}
 Let $M_{r \times t} ( \mathbb{F}_{q} ) = \bigg(\eta_{it}^{jz} \bigg)_{1 \leq t \leq k_i} ^{1 \leq z \leq k_j} $ be the set of all $r \times t $ matrices over $ \mathbb{F}_{q} $ and, we define $	\mathcal{U}(P,w,\pi)$ as
 \begin{equation}\label{eq2}
 	\mathcal{U}(P,w,\pi)=\left\{ (A_{ij} ) \in M_{N \times N} ( \mathbb{F}_{q} )  : 
  \begin{array}{ll}
  A_{ij}  \in M_{k_i \times k_j} ( \mathbb{F}_{q} ) \\
  A_{ij} = 0  \text{~if} ~ i \neq j \\
  A_{ii} \text{~is invertible with } \tilde{w}(\eta_{rt}^{rz}) =\tilde{w}(1) \text{ and }  \\ \tilde{w}( \alpha \eta_{rt}^{rz}) =\tilde{w}(\alpha)  \text{for all } r \in [n] \text{ and }  \alpha \in \mathbb{F}_{q}^ {k_r}
 \end{array}
  \right\}
 \end{equation}
We have a structure Theorem \ref{structureTheorem} for  	$  LIsom_{(P,w,\pi)}(\mathbb{F}_{q}^N)$, 
$\mathcal{T} $ is the group of the isometries satisfying the hypothesis of Proposition \ref{T collection}, and the 
$  \mathcal{I}mg(\Gamma)$ is the group of isometries of the form $T_\psi$  with $\psi \in \mathcal{AUT}(P, \pi) $.
\begin{theorem}\label{TImag}
	Every Linear isometry $S$ can be written in a unique way as a product of $S=F \circ T_\psi$ where $F \in \mathcal{T}$ and $T_\psi \in \mathcal{I}mg(\Gamma)$. Furthermore,
	$  LIsom_{(P,w,\pi)}(\mathbb{F}_{q}^N) \cong \mathcal{T} \rtimes  \mathcal{I}mg(\Gamma) \cong  \mathcal{U}(P,w,\pi)  \rtimes \mathcal{AUT}(P,\pi) $,
	where $ \mathcal{T} \rtimes  \mathcal{I}mg(\Gamma)$ is the semi-direct product of $\mathcal{T} $ by $  \mathcal{I}mg(\Gamma)$ induced by the action of $  \mathcal{I}mg(\Gamma)$ on $\mathcal{T}$ by conjugation and $ \cong$ denotes the group isomorphism.
\end{theorem}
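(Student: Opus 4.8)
The plan is to combine the structure theorem (Theorem \ref{structureTheorem}) with the explicit descriptions of the two subgroups $\mathcal{T}$ and $\mathcal{I}mg(\Gamma)$ to realize $LIsom_{(P,w,\pi)}(\mathbb{F}_q^N)$ as an internal semi-direct product. First I would verify the group-theoretic ingredients of an internal semi-direct product: by Theorem \ref{structureTheorem}(ii), $\mathcal{T} = \ker(\Phi_T)$ is a normal subgroup; by Theorem \ref{structureTheorem}(iii), $\Gamma$ is a section of the surjection $\Phi_T$, so $\mathcal{I}mg(\Gamma)$ is a subgroup mapping isomorphically onto $\mathcal{AUT}(P,\pi)$ under $\Phi_T$. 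Consequently $\mathcal{T} \cap \mathcal{I}mg(\Gamma)$ is trivial (any $T_\psi$ in the kernel has $\psi = \Phi_T(T_\psi) = \mathrm{id}$, hence $T_\psi = \mathrm{id}$), and $\mathcal{T} \cdot \mathcal{I}mg(\Gamma) = LIsom_{(P,w,\pi)}(\mathbb{F}_q^N)$: given any $S$, set $\psi = \Phi_T(S) \in \mathcal{AUT}(P,\pi)$ and $F = S \circ T_\psi^{-1}$; then $\Phi_T(F) = \Phi_T(S)\Phi_T(T_\psi)^{-1} = \psi\psi^{-1} = \mathrm{id}$, so $F \in \mathcal{T}$ and $S = F \circ T_\psi$. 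Uniqueness of this factorization is the standard consequence of the trivial intersection: if $F_1 T_{\psi_1} = F_2 T_{\psi_2}$ then $F_2^{-1}F_1 = T_{\psi_2}T_{\psi_1}^{-1} \in \mathcal{T}\cap\mathcal{I}mg(\Gamma) = \{\mathrm{id}\}$.

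Having established $S = F\circ T_\psi$ uniquely with $F\in\mathcal{T}$, $T_\psi\in\mathcal{I}mg(\Gamma)$ and $\mathcal{T}\triangleleft LIsom_{(P,w,\pi)}(\mathbb{F}_q^N)$ with trivial intersection and full product, the definition of the internal semi-direct product gives $LIsom_{(P,w,\pi)}(\mathbb{F}_q^N) = \mathcal{T}\rtimes\mathcal{I}mg(\Gamma)$, where the action of $\mathcal{I}mg(\Gamma)$ on $\mathcal{T}$ is conjugation inside the isometry group; one should note here that conjugation does preserve $\mathcal{T}$ precisely because $\mathcal{T}$ is normal. For the second isomorphism, I would invoke Theorem \ref{matric version}: the map sending $T\in\mathcal{T}$ to its matrix $[T]_B$ relative to the ordered basis $B$ is an injective group homomorphism whose image is exactly $\mathcal{U}(P,w,\pi)$ as defined in \eqref{eq2} — the block-diagonal invertible matrices with the weight constraints $\tilde{w}(\eta_{rt}^{rz}) = \tilde{w}(1)$ and $\tilde{w}(\alpha\eta_{rt}^{rz}) = \tilde{w}(\alpha)$. (I would check surjectivity onto $\mathcal{U}(P,w,\pi)$ by observing that any such block-diagonal matrix defines a map of the form in Proposition \ref{T collection} with $v^j = 0$, hence lies in $\mathcal{T}$.) Composed with the isomorphism $\mathcal{I}mg(\Gamma)\cong\mathcal{AUT}(P,\pi)$ coming from $\Phi_T\restriction_{\mathcal{I}mg(\Gamma)}$ (equivalently from $\Gamma$ being injective), this yields $\mathcal{T}\rtimes\mathcal{I}mg(\Gamma)\cong\mathcal{U}(P,w,\pi)\rtimes\mathcal{AUT}(P,\pi)$ with the transported action.

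The main obstacle I anticipate is not the abstract semi-direct product bookkeeping, which is routine once Theorem \ref{structureTheorem} is in hand, but rather making the second isomorphism genuinely compatible with the semi-direct structure: one must confirm that the conjugation action of $\mathcal{I}mg(\Gamma)$ on $\mathcal{T}$ corresponds, under the matrix identification, to the natural action of $\mathcal{AUT}(P,\pi)$ on $\mathcal{U}(P,w,\pi)$ by permuting the labelled blocks. Concretely, for $\psi\in\mathcal{AUT}(P,\pi)$ and $F\in\mathcal{T}$ one computes $T_\psi\circ F\circ T_\psi^{-1}$ on the basis $\mathcal{B}$ and checks that its block $[T_\psi F T_\psi^{-1}]_{B_r}^r$ equals $[F]_{B_{\psi^{-1}(r)}}^{\psi^{-1}(r)}$ (valid since $\pi(\psi^{-1}(r)) = \pi(r)$, so the block sizes match), i.e. the $\psi$-action simply relabels blocks; the off-diagonal consistency is automatic because both $\mathcal{T}$ and $T_\psi$ respect the order-ideal filtration. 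Once this identification of actions is verified, the two displayed isomorphisms follow, completing the proof. I would also remark in closing that specializing $w$ to the Hamming weight recovers the $(P,\pi)$-case of \cite{Ebc} and specializing $k_i = 1$ recovers the $(P,w)$-case of \cite{wcps}, consistent with the introduction.
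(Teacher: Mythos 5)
Your proposal is correct and follows essentially the same route as the paper: both use Theorem \ref{structureTheorem} to get $\mathcal{T}=\ker(\Phi)$ normal, $\Gamma$ a section with $\mathcal{T}\cap\mathcal{I}mg(\Gamma)=\{\mathrm{Id}\}$, the factorization $S=(S\circ T_{\psi^{-1}})\circ T_\psi$, and then the identifications $\mathcal{I}mg(\Gamma)\cong\mathcal{AUT}(P,\pi)$ and $\mathcal{T}\cong\mathcal{U}(P,w,\pi)$ for the second isomorphism. Your extra check that the conjugation action transports to block relabelling under the matrix identification is a point the paper leaves implicit, but it does not change the argument.
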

\begin{proof}
	Given $S \in LIsom_{(P,w,\pi)}(\mathbb{F}_{q}^N)$, if $\psi = \psi_S $, then $F = S \circ (T_\psi)^{-1}= S \circ T_{{\psi}^{-1}}$ is in $\mathcal{T}$ and $S = ( S \circ T_{{\psi}^{-1}}) \circ T_\psi$. This expression shows that $ LIsom_{(P,w,\pi)}(\mathbb{F}_{q}^N) = \mathcal{T} \circ \mathcal{I}mg(\Gamma)$. We have
seen that $ {\Phi} \circ \Gamma( { \psi } ) = { \psi } $ for all  $ \psi \in \mathcal{AUT}(P,\pi) $ and that $\Phi(T)$ is an identity map, for all $T \in \mathcal{T}$. Since $\mathcal{I}mg(\Gamma) = \Gamma (\mathcal{AUT}(P,\pi) ) $, it follows that $ \mathcal{I}mg(\Gamma) \cap \mathcal{T} = \{Id\}$ where \textit{Id} is the identity map;
from this and from the fact that $\mathcal{T}$ is a normal subgroup of
$LIsom_{(P,w,\pi)}(\mathbb{F}_{q}^N)$ we have the first isomorphism. The second
one follows from the isomorphisms $ \mathcal{I}mg(\Gamma) \equiv \mathcal{AUT}(P,\pi) $
and $ \mathcal{T} \equiv 	\mathcal{U}(P,w,\pi) $.
\end{proof}
 \begin{corollary}
 	 $LIsom_{(P,w,\pi)}(\mathbb{F}_{q}^N) = LIsom_{(P,w_H,\pi)}(\mathbb{F}_{q}^N) $
 	if and only if $w=\alpha w_H$ for some non-negative integer $\alpha$.
 \end{corollary}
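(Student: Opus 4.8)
The plan is to prove the two implications separately. The implication $(\Leftarrow)$ will be immediate from the defining formula for $w_{(P,w,\pi)}$, whereas $(\Rightarrow)$ will rest on a single well-chosen family of diagonal isometries.

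For $(\Leftarrow)$, suppose $w = \alpha w_H$. Since $w$ is a weight we must have $\alpha = w(1) \geq 1$, and then $M_w = \alpha$ and $\tilde{w}^{k_i}(v) = \alpha\,\tilde{w}_H^{k_i}(v)$ for every $i \in [n]$ and every $v \in \mathbb{F}_q^{k_i}$, simply by pulling the constant $\alpha$ out of the maximum over coordinates. Since $I_{x}^{P,\pi}$ and $M_{x}^{P,\pi}$ depend only on $supp_{\pi}(x)$ and on $P$, never on the weight chosen on $\mathbb{F}_q$, comparing the definitions of $w_{(P,w,\pi)}$ and $w_{(P,w_H,\pi)}$ term by term gives $w_{(P,w,\pi)} = \alpha\, w_{(P,w_H,\pi)}$, hence $d_{(P,w,\pi)} = \alpha\, d_{(P,w_H,\pi)}$. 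Scaling a metric by the positive constant $\alpha$ does not change its isometry group, so the two groups coincide.

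For $(\Rightarrow)$, assume the two isometry groups are equal. Fix a nonzero $c \in \mathbb{F}_q$ and let $T_c$ be the linear map sending $e_{1,1}$ to $c\,e_{1,1}$ and fixing every other element of the canonical basis $\mathcal{B}$. First I would verify that $T_c \in LIsom_{(P,w_H,\pi)}(\mathbb{F}_q^N)$: it is invertible, and because $c \neq 0$ it changes neither the $\pi$-support of any vector (so $I_{x}^{P,\pi}$ and $M_{x}^{P,\pi}$ are unaffected) nor any Hamming block weight (which only records whether a block is zero), whence $w_{(P,w_H,\pi)}$ is preserved. By hypothesis $T_c$ then also lies in $LIsom_{(P,w,\pi)}(\mathbb{F}_q^N)$, so $w_{(P,w,\pi)}(c\,e_{1,1}) = w_{(P,w,\pi)}(e_{1,1})$. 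Evaluating both sides from the definition — using that $supp_{\pi}(c\,e_{1,1}) = \{1\}$ and that $\langle 1\rangle$ is prime with unique maximal element $1$, so the sum over $\langle 1\rangle \setminus \{1\}$ contributes $(|\langle 1\rangle|-1)M_w$ on each side and cancels, while the maximal-block terms are $w(c)$ and $w(1)$ respectively — one is left with $w(c) = w(1)$. Since $c$ was an arbitrary nonzero scalar and $w(0)=0$, we conclude $w = w(1)\,w_H$, so $\alpha = w(1)$ is the required non-negative (indeed positive) integer.

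I expect no genuine obstacle in this corollary; the one step deserving care is the verification that $T_c$ is really a $(P,w_H,\pi)$-isometry, and this is exactly where one uses that the Hamming block weight is insensitive to rescaling a single coordinate — precisely the property that fails for a general weight $w$ unless $w$ is constant on the nonzero scalars. Alternatively, the converse could be routed through Theorem \ref{TImag}: since $\mathcal{I}mg(\Gamma)$ and the homomorphism $\Phi$ are weight-independent, equality of the two full isometry groups forces the kernels $\mathcal{T}$, equivalently the matrix groups $\mathcal{U}(P,w,\pi)$ and $\mathcal{U}(P,w_H,\pi)$, to coincide; comparing their admissible diagonal blocks then again forces $w$ to be constant on $\mathbb{F}_q \setminus \{0\}$.
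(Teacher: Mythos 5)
Your proof is correct, but your primary argument for the forward implication takes a genuinely different route from the paper's. The paper deduces the converse from Theorem \ref{TImag}: equality of the two isometry groups forces equality of the normal subgroups $\mathcal{T}$, i.e.\ of the matrix groups $\mathcal{U}(P,w,\pi)$ and $\mathcal{U}(P,w_H,\pi)$, and since the latter imposes nothing beyond invertibility of the diagonal blocks while the former additionally requires $\tilde{w}(\eta_{rt}^{rz})=\tilde{w}(1)$ and $\tilde{w}(\alpha\eta_{rt}^{rz})=\tilde{w}(\alpha)$, the weight $w$ must be constant on $\mathbb{F}_q\setminus\{0\}$ --- which is exactly the ``alternative'' you sketch in your last sentence. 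Your main argument instead exhibits an explicit family of isometries $T_c$ (rescaling a single coordinate of a single block), verifies directly that each $T_c$ preserves $w_{(P,w_H,\pi)}$ because the $\pi$-support and the Hamming block weight are insensitive to rescaling one coordinate by a nonzero scalar, and then reads off $w(c)=w(1)$ from $w_{(P,w,\pi)}(c\,e_{1,1})=w_{(P,w,\pi)}(e_{1,1})$ once the common contribution $(|\langle 1\rangle|-1)M_w$ cancels. This is more elementary and self-contained: it does not invoke the semidirect-product decomposition at all, and it makes explicit which isometries witness the failure of equality when $w$ is not a multiple of $w_H$; the paper's route is shorter given the machinery already in place but is stated very tersely. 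Your backward implication, via the identity $w_{(P,w,\pi)}=\alpha\,w_{(P,w_H,\pi)}$, fills in what the paper asserts in one line, and your remark that $\alpha$ must in fact be positive (since $\alpha=0$ would violate the axiom that $w(u)=0$ only for $u=0$) is a point the statement itself glosses over.
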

  \begin{proof}
  	\sloppy{If $w=\alpha w_H$ for some non-negative integer $\alpha \in \mathbb{F}_q$, we have that $LIsom_{(P,\alpha w_H,\pi)}(\mathbb{F}_{q}^N) = LIsom_{(P,w_H,\pi)}(\mathbb{F}_{q}^N) $.  Now	if $LIsom_{(P,w,\pi)}(\mathbb{F}_{q}^N) = LIsom_{(P,w_H,\pi)}(\mathbb{F}_{q}^N) $, since 
  	$\mathcal{U}(P,w_H,\pi) = \mathcal{U}(P,\alpha w_H,\pi)$ and 	$\mathcal{U}(P,w,\pi) = \mathcal{U}(P,\alpha w_H,\pi)$, then $w = \alpha w_H $ where $w(\alpha) = w(1)$.}
  \end{proof} 
 \subsection{Examples: Linear Isometries on $(P,w)$-space and $(P,\pi)$-space}
  The  $(P,w,\pi)$-space becomes the $(P,w)$-space (as in  \cite{wcps}) if $k_i = 1 $ for every $i\in [n]$ and the  $(P,w,\pi)$-space becomes the $(P,\pi)$-space  (as in \cite{Ebc}) if $w$ is the Hamming weight  on $\mathbb{F}_q$. Linear isometries of $(P,w)$-space and $(P,\pi)$-space is already described in \cite{wcps} and \cite{Ebc} respectively.
  With the help of the particular Theorem  \ref{TImag}, we will re-obtain linear isometries for those spaces. 
  \par \sloppy{In the case that  $k_i = 1 $ for every $i\in [n]$, $A_{ij}  \in \mathbb{F}_{q} $ from equation \ref{eq2}, we get $ \mathcal{U}(P,w,\pi)= \{ (A_{ij} ) \in M_{n \times n} ( \mathbb{F}_{q} )  : 
  A_{ij} = 0  \text{~if} ~ i \nleq j \text{ and }	w(A_{ii}) = w(1) \text{ such that } w(\alpha A_{ii}) = w(\alpha)~ \forall~ \alpha \in  \mathbb{F}_{q}  
  \} = \mathcal{U}(P,w) $ 
  and $\mathcal{AUT}(P,\pi) = \mathcal{AUT}(P) $. Then, the characterization of $ LIsom_{(P,w,\pi)}(\mathbb{F}_{q}^N)$ given in \cite{wcps} follows from the Theorem \ref{TImag} as:}
  \begin{align*}
  	LIsom_{(P,w,\pi)}(\mathbb{F}_{q}^N) \cong  \mathcal{U}(P,w)  \rtimes \mathcal{AUT}(P).
  \end{align*}
  \par Now, we consider the case when $w$ is the Hamming weight on $\mathbb{F}_q$,  $(P,w,\pi)$-space is then $(P,\pi)$-space.  Thus, from equation \ref{eq2} we get:
  \begin{equation}
 	\mathcal{U}(P,w,\pi)=\left\{ (A_{ij} ) \in M_{N \times N} ( \mathbb{F}_{q} )  : 
 	\begin{array}{ll}
 		A_{ij}  \in M_{k_i \times k_j} ( \mathbb{F}_{q} ) \\
 		A_{ij} = 0  \text{~if} ~ i \neq j \\
 		A_{ii} \text{~is invertible}
 	\end{array}
 	\right\}
 \end{equation}
Then, the characterization of $ LIsom_{(P,w,\pi)}(\mathbb{F}_{q}^N)$ given in \cite{Ebc} follows from the Theorem \ref{TImag} as:
\begin{align*}
	LIsom_{(P,w_H,\pi)}(\mathbb{F}_{q}^N) \cong  \mathcal{U}(P,w_H,\pi)  \rtimes \mathcal{AUT}(P,\pi).
\end{align*}
 \par  We now consider the case when $P$ is an antichain. The $\pi$-weight of $x=x_1+x_2+\dots+x_n \in \mathbb{F}_{q}^N$ is defined to be 
 \begin{equation*}
 	w_\pi (x) = | \{ i : x_i \neq 0 \}|
 \end{equation*}
 and the $(P,\pi)$-weight of $x$ is $	w_{(P,\pi)} (x) = 	w_\pi (x) $.
  In this case $\langle i \rangle = \{ i \}$ for each $ i \in [n]$, and hence the upper-triangular maps $T$ take $\mathbb{F}_{q}$ isomorphically onto itself. Therefore, 
\begin{equation*}
	\mathcal{T} \cong 	LIsom(k_1, \tilde{w}, \mathbb{F}_{q}) \times  LIsom(k_2, \tilde{w}, \mathbb{F}_{q}) \times \cdots \times LIsom(k_n, \tilde{w}, \mathbb{F}_{q}) 
\end{equation*}
  where $LIsom(\tilde{w}, \mathbb{F}_{q}) $ is the group of the linear transformation $T : \mathbb{F}_{q} \rightarrow \mathbb{F}_{q} $ that preserves the weight $\tilde{w}$. 
  \par Given $N=k_1 + k_2+ \ldots + k_n$, let
  $t_1, t_2 , \ldots, t_l$ be the $l$ distinct elements ($t_1 > t_2 > \ldots > t_l > 0$) in the parts  $k_1,k_2,\ldots,  k_n$ with multiplicity $r_1,r_2,\ldots,r_l$ respectively so that $ \sum\limits_{s=1}^{l} r_s  t_s=k_1+k_2+\dots+ k_n=N $. Let $\pi(N)=[t_1]^{r_1}  [t_2]^{r_2} \ldots [t_l]^{r_l}$ denote as a partition of $N$. 
  On the other hand $\mathcal{AUT}(P) \cong S_n$ and  $\mathcal{AUT}(P, \pi)$ can be identified with a  subgroup of $S_n$. Thus, $\mathcal{AUT}(P, \pi)$ only permutes those vertices with same labels and therefore 
\begin{equation*}
	\mathcal{AUT}(P, \pi) \cong S_{r_1} \times  S_{r_2} \times \ldots \times  S_{r_l}.
\end{equation*}
 From Theorem \ref{TImag} it follows that
\begin{align*}
	LIsom_{(P,w_H,\pi)}(\mathbb{F}_{q}^N) \cong  \bigg(\prod\limits_{i=1}^{n}	LIsom(k_i, \tilde{w}, \mathbb{F}_{q}) \bigg) \rtimes  \bigg( \prod\limits_{i=1}^{l}	S_{r_i} \bigg).
\end{align*}
\bibliographystyle{amsplain}

\end{document}